\pgfplotsset{compat=1.18}
\newcommand{\Mod}[1]{\ (\mathrm{mod}\ #1)}
\definecolor{uuuuuu}{rgb}{0.27,0.27,0.27}
\definecolor{sqsqsq}{rgb}{0.1255,0.1255,0.1255}
\newtheorem{definition}{Definition} [section]
\newtheorem{theorem}[definition]{Theorem}
\newtheorem{lemma}[definition]{Lemma}
\newtheorem{claim}[definition]{Claim}
\newtheorem{fact}[definition]{Fact}
\begin{document}
\title{\bf\Large Andr{\'a}sfai--Erd\H{o}s--S\'{o}s theorem under max-degree constraints}
\date{\today}
\author[1]{Xizhi Liu\thanks{Email: \texttt{liuxizhi@ustc.edu.cn}}}
\author[2]{Sijie Ren\thanks{Email: \texttt{rensijie1@126.com}}}
\author[2]{Jian Wang\thanks{Email: \texttt{wangjian01@tyut.edu.cn}}}
\affil[1]{{\small School of Mathematical Sciences, University of Science and Technology of China, Hefei, China}}
\affil[2]{{\small Department of Mathematics, 
            Taiyuan University of Technology, Taiyuan, China}}
%
\maketitle
\begin{abstract}
    We establish the following strengthening of the celebrated Andr{\'a}sfai--Erd\H{o}s--S\'{o}s theorem: If $G$ is an $n$-vertex $K_{r+1}$-free graph whose minimum degree $\delta(G)$ and maximum degree $\Delta(G)$ satisfy 
    \begin{align*}
        \delta(G) > \min \left\{ \frac{3r-4}{3r-2}n-\frac{\Delta(G)}{3r-2},~n-\frac{\Delta(G)+1}{r-1} \right\}, 
    \end{align*}
    then $G$ is $r$-partite. 
    This bound is tight for all feasible values of $\Delta(G)$. 
    We also obtain an analogous tight result for graphs with large odd girth. 

    Our proof does not rely on the Andr{\'a}sfai--Erd\H{o}s--S\'{o}s theorem itself, and therefore yields an alternative proof of this classical result.
\end{abstract}

\section{Introduction}\label{SEC:Introduction}
Given a family $\mathcal{F}$ of graphs, we say a graph $G$ is $\mathcal{F}$-free if it does not contain any member of $\mathcal{F}$ as a subgraph. 
Let $K_{r}$ and $C_{r}$ denote the complete graph and the cycle on $r$ vertices, respectively. 
Extending the seminal theorem of Tur\'{a}n~\cite{TU41}, Andr{\'a}sfai--Erd\H{o}s--S\'{o}s~\cite{AES74} established the following classical result. 

\begin{theorem}[\cite{AES74}]\label{THM:AES-clique}
    Let $r \ge 2$ be an integer. 
    Every $K_{r+1}$-free graph on $n$ vertices with minimum degree greater than $\frac{3r-4}{3r-1}n$ is $r$-partite.
\end{theorem}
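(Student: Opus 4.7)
The plan is to prove Theorem \ref{THM:AES-clique} by induction on $r$. The base case $r = 2$ is the statement that every $n$-vertex triangle-free graph $G$ with $\delta(G) > 2n/5$ is bipartite. I would proceed by contradiction: suppose $G$ is triangle-free, not bipartite, with $\delta(G) > 2n/5$, and pick a shortest odd cycle $C = v_1 v_2 \cdots v_{2k+1}$, which necessarily has length at least $5$. The main claim is that every vertex $w \in V(G)$ has at most two neighbours on $C$: if $w$ were adjacent to two vertices $v_i, v_j$ at shorter-arc distance $d$ in $C$, then $w$ would close off cycles of lengths $d+2$ and $2k+3-d$, one of which is odd and strictly shorter than $2k+1$ except in the case $d = 2$ (with $d = 1$ ruled out by triangle-freeness). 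Summing $\sum_{i=1}^{2k+1} |N(v_i)| \leq 2n$ and using $|N(v_i)| \geq \delta(G)$ gives $\delta(G) \leq 2n/(2k+1) \leq 2n/5$, a contradiction.

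For the inductive step with $r \geq 3$, I would first dispose of the easy subcase where $G$ is $K_r$-free: a one-line algebraic check shows $\frac{3r-4}{3r-1} > \frac{3(r-1)-4}{3(r-1)-1} = \frac{3r-7}{3r-4}$, so the inductive hypothesis immediately yields that $G$ is $(r-1)$-partite and in particular $r$-partite. Now assume $G$ contains some $K_r$ on vertices $\{u_1, \ldots, u_r\}$. Since $G$ is $K_{r+1}$-free, every $v \in V(G) \setminus \{u_1, \ldots, u_r\}$ must miss at least one $u_i$, and the high minimum-degree condition forces each non-neighbourhood $S_i := V(G) \setminus (N(u_i) \cup \{u_i\})$ to have size less than $\frac{3n}{3r-1}$. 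The goal is then to refine the covering $\{S_1 \cup \{u_1\}, \ldots, S_r \cup \{u_r\}\}$ of $V(G)$ into a genuine $r$-partition.

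The chief obstacle will be showing that, after a suitable reassignment of vertices lying in several $S_i$, every part becomes an independent set. An edge $vw$ inside a putative part $S_i \cup \{u_i\}$ is dangerous because, together with the remaining $u_j$'s, it almost completes a $K_{r+1}$; the plan is to use the minimum-degree hypothesis once more to force one of $v, w$ to have a non-neighbour among the $u_j$ with $j \neq i$, enabling reassignment. A useful structural observation here is that for any $K_{r-1}$ subgraph $\{u_1, \ldots, u_{r-1}\}$ of $G$, the common neighbourhood $\bigcap_{i=1}^{r-1} N(u_i)$ must be an independent set, since any edge inside it would yield a $K_{r+1}$; this gives rigid local constraints that I would chain together with the base case (triangle-free links of smaller cliques) to produce the $r$-partition. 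I expect that making this reassignment procedure terminate at an honest $r$-colouring, while preserving the minimum-degree hypothesis throughout, is the technical heart of the argument, and that the tightness of the constant $\frac{3r-4}{3r-1}$ enters precisely at this step.
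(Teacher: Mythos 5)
Your base case ($r=2$) is correct, and so is the easy subcase of the induction where $G$ is $K_r$-free. The genuine gap is in the main inductive step: after covering $V(G)$ by the sets $S_1\cup\{u_1\},\dots,S_r\cup\{u_r\}$ coming from a fixed copy of $K_r$, you never actually produce an $r$-partition; you only describe a hoped-for reassignment procedure and yourself concede that making it terminate is the technical heart. That heart is the entire content of the theorem, and the tools you name are not enough. Your key observation --- that an edge $vw$ inside $S_i\cup\{u_i\}$ forces one of $v,w$ to miss some $u_j$ with $j\neq i$ --- already follows from $K_{r+1}$-freeness alone, with no use of the degree hypothesis, so it cannot by itself drive the argument. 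Moreover, the balanced blow-up of $W_r$ (the join of $C_5$ with $K_{r-2}$, which is the extremal configuration in this paper) is $K_{r+1}$-free, has minimum degree $\frac{3r-4}{3r-1}n$, admits exactly the covering structure you set up with each $|S_i|<\frac{3}{3r-1}n$, and has chromatic number $r+1$, so in it \emph{no} reassignment can succeed. Hence any completion of your plan cannot be a purely local recolouring argument: it must extract a quantitative contradiction from the strict inequality $\delta(G)>\frac{3r-4}{3r-1}n$, essentially by detecting and excluding a $C_5$-type pattern among the parts (e.g.\ by counting over common neighbourhoods of suitable $(r-1)$-cliques), and no such argument appears in your sketch.

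For comparison, the paper proves the stronger Theorem~\ref{THM:max-deg-AES-clique} and sidesteps this difficulty differently: it first makes $G$ edge-maximal $K_{r+1}$-free, applies the inductive hypothesis inside the neighbourhood $N_G(u)$ of a maximum-degree vertex (which is $K_r$-free and still dense enough), and then, when the resulting $(r-1)$-partition of $N_G(u)$ together with a maximum independent set does not already cover $V(G)$, it reaches a contradiction by an explicit inclusion--exclusion count on the common neighbourhoods of two $(r-1)$-cliques and the neighbourhood of a leftover vertex $z$, showing their union would exceed $n$. That counting step is precisely the quantitative use of the degree bound that is missing from your outline, so as it stands the inductive step is not proved.
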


Let $C_{\le 2k+1} \coloneqq \{C_3, C_5, \cdots, C_{2k+1}\}$ denote the collection of all odd cycles of length at most $2k+1$.  
The following theorem was also established in~\cite{AES74}.  
 
\begin{theorem}[\cite{AES74}]\label{THM:AES-odd-cycle}
    Every $C_{\le 2k+1}$-free graph on $n$ vertices with minimum degree greater than $\frac{2}{2k+3}n$ is bipartite.
\end{theorem}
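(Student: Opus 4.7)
The plan is to proceed by contradiction. Suppose $G$ is $C_{\le 2k+1}$-free with $\delta(G) > \frac{2n}{2k+3}$ but $G$ is not bipartite. Then $G$ contains some odd cycle; let $C = v_1 v_2 \cdots v_{2\ell+1}v_1$ be a \emph{shortest} odd cycle in $G$. Since $G$ has no odd cycle of length at most $2k+1$, we have $2\ell+1 \ge 2k+3$. I will use the minimality of $C$ to severely restrict how other vertices can interact with $C$, and then finish by a two-way count of edges between $V(C)$ and $V(G) \setminus V(C)$.

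The structural core of the argument consists of two claims, both proved by the same idea: a chord or a shortcut would split $C$ into two cycles of lengths summing to $|C|+2$, one of which is odd and strictly shorter than $|C|$, contradicting minimality. \textbf{Claim 1:} $C$ is induced, so each $v_i$ has exactly two neighbors inside $V(C)$. \textbf{Claim 2:} every vertex $v \in V(G) \setminus V(C)$ has at most two neighbors on $C$, and if it has exactly two, they are consecutive on $C$. Indeed, if $v_i, v_j \in N(v) \cap V(C)$ lie at cyclic distance $d$ on $C$, then $v$ together with the two arcs of $C$ between $v_i$ and $v_j$ produces cycles of lengths $d+1$ and $|C|+1-d$, one of which is odd; minimality forces $d \le 1$. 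Three neighbors on $C$ would include a pair at distance $\ge 2$, again impossible.

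With these two claims in hand, I count edges between $V(C)$ and $V(G) \setminus V(C)$. By Claim 1, each $v_i$ contributes at least $\delta(G) - 2$ such edges, giving a lower bound of $(2\ell+1)(\delta(G)-2)$. By Claim 2, each outside vertex contributes at most $2$, giving an upper bound of $2(n - (2\ell+1))$. Comparing these,
\begin{align*}
(2\ell+1)(\delta(G) - 2) \;\le\; 2\bigl(n - (2\ell+1)\bigr),
\end{align*}
which rearranges to $\delta(G) \le \frac{2n}{2\ell+1} \le \frac{2n}{2k+3}$, contradicting the hypothesis.

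I do not expect any serious obstacle: the whole argument rests on the cycle-splitting observation applied to the shortest odd cycle, and the rest is a clean double count. The only small care is to make sure the strict inequality in the hypothesis is carried through (replacing $\le$ by $<$ at the right step) so that the final contradiction is genuine. This approach also does not invoke Theorem~\ref{THM:AES-clique} and is self-contained, in line with the paper's stated goal of providing alternative proofs.
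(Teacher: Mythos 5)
Your overall strategy is correct and is essentially the same mechanism the paper uses: the paper proves the stronger $\Delta$-refined Theorem~\ref{THM:max-deg-AES-odd-cycle} via Fact~\ref{FACT:odd-girth} (the structural statement about neighbours on a shortest odd cycle) followed by exactly this kind of double count of $\sum_{u}|N_G(u)\cap V(C)|$, and Theorem~\ref{THM:AES-odd-cycle} is then read off as a corollary; your argument is the direct specialization of that count to the classical statement.

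One step is misstated, however. For a vertex $v\notin V(C)$ adjacent to $v_i,v_j$ at cyclic distance $d$, the two cycles through $v$ have lengths $d+2$ and $|C|+2-d$ (summing to $|C|+4$), not $d+1$ and $|C|+1-d$; the latter is the chord computation from your Claim 1, and you have carried it over to Claim 2. With the correct lengths, minimality of $|C|$ forces $d=2$, not $d\le 1$: two \emph{consecutive} neighbours on $C$ would create a triangle, itself a shorter odd cycle, so your Claim 2 as stated is false, and the correct conclusion is the one in Fact~\ref{FACT:odd-girth}, namely $N_G(v)\cap V(C)=\{v_i,v_{i+2}\}$ when there are two neighbours. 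Similarly, ``three neighbours include a pair at distance $\ge 2$'' is not by itself a contradiction, since distance exactly $2$ is permitted; the right argument is that three neighbours would have to be pairwise at distance exactly $2$, which is impossible on an odd cycle of length at least $5$. None of this damages your final count, which only uses that every outside vertex has at most two neighbours on $C$ and that $C$ is induced, so after this local correction the inequality $(2\ell+1)(\delta-2)\le 2\bigl(n-(2\ell+1)\bigr)$ and the contradiction $\delta\le \frac{2n}{2\ell+1}\le\frac{2n}{2k+3}$ go through as you wrote them.
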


In this note, we study the affect of a large degree vertex on the two theorems of Andr{\'a}sfai--Erd\H{o}s--S\'{o}s. 
Our motivation partly comes from a classical result of Balister--Bollob{\'a}s--Riordan--Schelp~\cite{BBRS03}, which, extending the seminal theorems of Mantel~\cite{Man07} and Simonovits~\cite{Sim69} (see also~{\cite[p.~150]{Bol78}}), shows that for large $n$, the maximum number of edges in an $n$-vertex $C_{2k+1}$-free graph with maximum degree $\Delta \in [n/2,~n-k-1]$ is achieved by the complete bipartite graph with one part of size $\Delta$. 
A similar statement for $K_{r+1}$-free graph follows easily from Erd\H{o}s' degree-majorization algorithm~\cite{Erd70} (see also~\cite{Fur15}). 
More recently, related questions have been investigated in~\cite{HY22} and~\cite{HHLLYZ23c} for edge-critical graphs and hypergraphs, with further applications to matching-type problems~\cite{HLLYZ23,HHLLYZ25deg,DHLY25}. 

For a graph $G$, let $\delta(G)$ and $\Delta(G)$ denote the minimum and maximum degree of $G$, respectively.
The main results of this note are as follows. 

\begin{theorem}\label{THM:max-deg-AES-clique}
    Let $r \ge 2$ be an integer. 
    Suppose that $G$ is an $n$-vertex $K_{r+1}$-free graph satisfying 
    \begin{align*}
        \delta(G) 
        > \min \left\{ \frac{3r-4}{3r-2}n-\frac{\Delta(G)}{3r-2},~n-\frac{\Delta(G)+1}{r-1}\right\}. 
    \end{align*}
    Then $G$ is $r$-partite.
\end{theorem}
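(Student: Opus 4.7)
The plan is to prove the theorem by induction on $r$, with the base case $r = 2$ treated by a separate structural argument --- for instance, via an analysis of a shortest odd cycle along the lines of classical proofs of Andr\'asfai--Erd\H{o}s--S\'os, now adapted to incorporate the max-degree parameter.

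For the inductive step from $r-1$ to $r$, pick a vertex $v$ of maximum degree $\Delta := \Delta(G)$, and set $A := N(v)$ and $B := V(G) \setminus N[v]$, so $|A| = \Delta$ and $|B| = n-1-\Delta$. Since $G$ is $K_{r+1}$-free, the induced subgraph $H := G[A]$ is $K_r$-free. Because each $u \in A$ has $v$ among its neighbors and at most $|B|$ further neighbors outside $A$,
\[
\delta(H) \;\ge\; \delta(G) - 1 - (n - 1 - \Delta) = \delta(G) + \Delta - n.
\]
A routine algebraic calculation shows that whichever of the two bounds in the $\min$ of the hypothesis is satisfied by $\delta(G)$, the corresponding bound for parameter $r-1$ (with the substitutions $n \mapsto |A| = \Delta$ and $\Delta(G) \mapsto \Delta(H)$) is satisfied by $\delta(H)$; for the first bound this uses that the hypothesis automatically forces $\Delta > \tfrac{3r-4}{3r-1}n$, while for the second bound the reduction is essentially immediate. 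Applying the inductive hypothesis yields an independent-set partition $A = A_1 \sqcup \cdots \sqcup A_{r-1}$.

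The main obstacle is to extend this to an $r$-partition of $G$. The natural candidate is $(A_1, \ldots, A_{r-1}, \{v\} \cup B)$, which succeeds precisely when $B$ is independent (since $v$ has no neighbor in $B$). Suppose for contradiction $u_1 u_2 \in E(G[B])$: since neither $u_i$ is adjacent to $v$, the degree hypothesis gives $|N(u_i) \cap A| \ge \delta(G) + \Delta - n + 2$, so the common neighborhood $X := N(u_1) \cap N(u_2) \cap A$ has size at least $2\delta(G) + \Delta - 2n + 4$. Since $u_1 u_2 \in E(G)$ and $G$ is $K_{r+1}$-free, $G[X]$ must be $K_{r-1}$-free.

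The hardest step is deriving a contradiction from ``$G[X]$ is a large $K_{r-1}$-free subgraph of the $(r-1)$-partite graph $H$''; $(r-1)$-partiteness of $H$ alone does not guarantee a $K_{r-1}$ inside $X$, so additional structural information must be brought in. I expect this to require either (a) strengthening the induction statement into a stability-type companion forcing $H$ to be close to the complete $(r-1)$-partite graph (so that $X$, being large, must contain a $K_{r-1}$), or (b) choosing $v$ more cleverly than just a maximum-degree vertex --- for example, by maximizing an auxiliary weighted quantity --- so that each $u \in B$ is forced to miss at least one part $A_i$ and can be absorbed there, yielding an $r$-partition of $G$ even when $B$ itself carries edges. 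The ``large $\Delta$'' regime (second bound) should be substantially easier, since then $|B|$ is tiny; the main technical content lies in the ``small $\Delta$'' regime (first bound), which recovers the classical AES bound in the balanced case $\Delta = \delta$ and thus yields the promised independent proof of AES.
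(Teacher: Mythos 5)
Your setup matches the paper's skeleton: induction on $r$, base case $r=2$ handled separately, and for the inductive step an application of the induction hypothesis to $H=G[N(u)]$ for a maximum-degree vertex $u$ (your degree computation for $\delta(H)$ and the observation that the first regime forces $\Delta>\frac{3r-4}{3r-1}n$ are exactly the content of the paper's Claims on $H$). But the step you yourself flag as the ``main obstacle'' is precisely where the paper's real work lies, and the route you propose for it cannot succeed as stated. Your candidate partition $(A_1,\ldots,A_{r-1},\{v\}\cup B)$ requires $B=V\setminus N[v]$ to be independent, and this is simply false for graphs satisfying the hypothesis. For example (case $r=2$): take parts $P_1$, $P_2$ with $|P_1|=m$, $|P_2|=m+1$, join them completely and then delete one edge at each vertex of $P_1$, the deleted non-neighbors being distinct vertices of $P_2$, with $v\in P_1$ missing $x\in P_2$. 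Then $\delta=m-1$, $\Delta=m=d(v)$, $n=2m+1$, so $\delta>\min\{\frac{n}{2}-\frac{\Delta}{4},\,n-\Delta-1\}$ for $m\ge 7$, yet $B=(P_1\setminus\{v\})\cup\{x\}$ contains all edges from $x$ to $P_1\setminus\{v\}$. So no argument can show $B$ is independent; the correct $r$-th part must be built differently, and neither of your speculative fixes (a stability companion, or a cleverer choice of $v$) is what the paper does.

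The paper's resolution, which is the idea missing from your proposal, is to first pass to an edge-maximal $K_{r+1}$-free supergraph (the degree hypotheses are monotone under adding edges). Maximality gives that the common neighborhood of any non-adjacent pair contains a $K_{r-1}$; applied to $u$ and a vertex outside $N[u]$, this produces a transversal clique $v_2,\ldots,v_r$ of the $(r-1)$-partition of $N(u)$, and the $r$-th part is taken to be (a maximal independent set containing) $\bigcap_{i=2}^{r}N_G(v_i)$, not $\{u\}\cup B$. In the regime $\delta> n-\frac{\Delta+1}{r-1}$ inclusion--exclusion already shows this intersection is all of $V\setminus N(u)$, finishing that case. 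In the harder regime $\delta>\frac{3r-4}{3r-2}n-\frac{\Delta}{3r-2}$ a leftover set $Z$ may remain, and the paper kills it by a further argument: for $z\in Z$ it grows a maximum clique through $z$ and a neighbor in the $r$-th part, shows this clique has size $r$, invokes maximality again to get a second transversal clique inside $N(u)\cap N(z)$, and reaches the contradiction $|N_1\cup N_2\cup N_G(z)|>n$ by counting. Without the edge-maximality device (or some substitute supplying cliques in common neighborhoods) your outline does not close, so as it stands the proposal has a genuine gap at the decisive step.
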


\begin{theorem}\label{THM:max-deg-AES-odd-cycle}
    Let $k \ge 2$ be an integer. 
    Suppose that $G$ is an $n$-vertex $C_{\le 2k+1}$-free graph satisfying 
    \begin{align*}
        \delta(G) > \min \left\{ \frac{n}{k+1}-\frac{\Delta(G)}{2k+2},~\frac{n-1-\Delta(G)}{k} \right\}. 
    \end{align*}
    Then $G$ is bipartite. 
\end{theorem}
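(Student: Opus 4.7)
The plan is to prove the contrapositive. Suppose $G$ is not bipartite (and $C_{\le 2k+1}$-free); I will establish both
\[
(2k+2)\delta(G) + \Delta(G) \le 2n
\quad\text{and}\quad
k\delta(G) + \Delta(G) \le n-1,
\]
which together are equivalent to $\delta(G) \le \min\{\frac{n}{k+1} - \frac{\Delta(G)}{2k+2},\,\frac{n-1-\Delta(G)}{k}\}$, contradicting the hypothesis. Write $\delta = \delta(G)$ and $\Delta = \Delta(G)$.

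\textbf{Setup.} Fix a shortest odd cycle $C = v_0 v_1 \cdots v_{2\ell}$ in $G$. Since $G$ is $C_{\le 2k+1}$-free, $2\ell+1 \ge 2k+3$, hence $\ell \ge k+1$. Standard shortest-cycle considerations yield two facts: (i) $C$ is chordless, since any chord would split $C$ into a strictly shorter odd cycle; and (ii) every $w \in V \setminus C$ has $|N(w) \cap C| \le 2$, because two cycle-neighbors adjacent on $C$ would form a triangle and two non-adjacent cycle-neighbors at odd cycle-distance would yield a strictly shorter odd cycle through $w$. Combining (ii) with a double-count of the edges between $C$ and $V \setminus C$ gives
\[
\sum_{v_i \in C} |N(v_i) \setminus C| \;=\; \sum_{w \in V \setminus C} |N(w) \cap C| \;\le\; 2(n - 2\ell - 1).
\]

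\textbf{First inequality.} Let $v^*$ be a vertex with $\deg(v^*) = \Delta$, and I would arrange $v^* \in C$ by choosing $C$ to be a shortest odd cycle through $v^*$ within its component (handling the case where $v^*$'s component is bipartite separately, since the relevant inequalities are then driven by a non-bipartite component). Since the $2\ell$ cycle vertices other than $v^*$ each have degree $\ge \delta$ and exactly two cycle-neighbors, one obtains
\[
\sum_{v_i \in C} |N(v_i) \setminus C| \;\ge\; (\Delta - 2) + 2\ell(\delta - 2).
\]
Combining with the upper bound yields $2\ell \delta + \Delta \le 2n$; since $\ell \ge k+1$, we conclude $(2k+2)\delta + \Delta \le 2n$.

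\textbf{Second inequality and main obstacle.} The target is $|S| \ge k\delta$, where $S := V \setminus N[v^*]$, which rearranges to $k\delta + \Delta \le n-1$. With $v^* = v_0 \in C$, the $2\ell - 2 \ge 2k$ cycle vertices $v_2, \ldots, v_{2\ell-1}$ already lie in $S$, exactly matching $k\delta = 2k$ when $\delta = 2$ (the extremal $C_{2k+3}$ case). For $\delta > 2$, the remaining $k(\delta - 2)$ elements of $S$ are to be found in $V \setminus C$: writing $W := N(v^*) \setminus C$, which has size $\Delta - 2$, each $w \in W$ is non-adjacent to every other member of $W$ (by triangle-freeness and $N(v^*)$ being independent), has at most two cycle-neighbors by (ii), and hence contributes $\ge \delta - 2$ neighbors into $S' := S \cap (V \setminus C)$. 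A careful accounting that uses (ii) to bound how many $w \in W$ can share a common $S'$-vertex as a neighbor then yields $|S'| \ge k(\delta - 2)$, which combined with the cycle contribution gives $|S| \ge k\delta$. The main obstacle is precisely this second inequality: its tightness on $C_{2k+3}$ leaves no slack, so the overlap control supplied by (ii) must be sharp. By contrast, the first inequality follows cleanly from the AES-style double-count once $v^* \in C$ is arranged.
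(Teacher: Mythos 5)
There is a genuine gap, in fact two. First, your whole setup needs the max-degree vertex $v^*$ to lie on a \emph{globally} shortest odd cycle: facts (i) and (ii) (chordlessness, and at most two cycle-neighbors, necessarily at cycle-distance $2$) are proved by exhibiting a strictly shorter odd cycle, and that shorter cycle need not pass through $v^*$ nor have length $\le 2k+1$. So if you replace $C$ by "a shortest odd cycle through $v^*$" (which, incidentally, need not exist even in a non-bipartite component --- $v^*$ may lie on no cycle at all), both (i) and (ii) can fail and the double count collapses. The case in which no shortest odd cycle contains a vertex of maximum degree is exactly where the paper does real work: its Case~2 keeps $C$ a shortest odd cycle, places $u$ of degree $\Delta$ off $C$, and proves (Claim~3.2) that $N_G(u)$ can meet $\bigcup_i N_G(v_i)$ only inside $N_G(v_1)\cup N_G(v_{2m})$ --- using that an offending configuration would create either an odd cycle shorter than $2m+1$ or a $(2m+1)$-cycle through $u$, which is excluded in that case --- and then sums $\Delta$ plus $m\ge k+1$ pairwise disjoint neighborhoods. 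Your proposal has no substitute for this step.

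Second, even in the favorable case $v^*\in C$, your route to $k\delta+\Delta\le n-1$ is not sound as sketched. The claimed bound $|S'|\ge k(\delta-2)$ is supposed to follow from "using (ii) to bound how many $w\in W$ can share a common $S'$-vertex as a neighbor", but (ii) only restricts adjacencies to the cycle, not adjacencies between $W=N(v^*)\setminus C$ and $S'$: a single vertex at distance two from $v^*$ may be adjacent to essentially all of $W$ (this happens in the extremal blowups of $C_{2k+3}$, where a whole part adjacent to $N(v^*)$ plays this role), so the per-vertex contributions of $\delta-2$ cannot be aggregated into $k(\delta-2)$ by any overlap bound of that kind; a crude edge count between $W$ and $S'$ only yields $|S'|\gtrsim \delta-2$. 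The paper avoids this entirely: it selects $m$ cycle vertices, including $v_0$ of degree $\Delta$, pairwise at cycle-distance $\neq 2$ (so their neighborhoods are pairwise disjoint by Fact~3.1), together with one explicit cycle vertex covered by none of these neighborhoods, giving $n\ge 1+\Delta+(m-1)\delta\ge 1+\Delta+k\delta$ directly. Your first inequality, by contrast, is fine and matches the paper's Case~1 computation once $v^*\in C$ is legitimately arranged --- but arranging it is precisely what is missing.
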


It is straightforward to verify that Theorems~\ref{THM:max-deg-AES-clique} and~\ref{THM:max-deg-AES-odd-cycle} imply the corresponding theorems of Andr{\'a}sfai--Erd\H{o}s--S\'{o}s. 
We note that our proofs differ from those of Andr{\'a}sfai--Erd\H{o}s--S\'{o}s and the alternative elegant short proof of Brandt~\cite{Bra03}, and they do not rely on Theorems~\ref{THM:AES-clique} and~\ref{THM:AES-odd-cycle}. Thus, they provide alternative proofs for both theorems of Andr{\'a}sfai--Erd\H{o}s--S\'{o}s.  
Moreover, the bounds in the above theorems are tight, as demonstrated by the following constructions.
For simplicity, we omit floor and ceiling signs below.

Let $x_1, \ldots, x_k$ be positive integers. 
For a graph $H$ on vertex set $[k]$, the \emph{blowup} $H[x_1, \ldots, x_k]$ is the graph obtained from $H$ by replacing each vertex $i$ with a set of size $x_i$, and replacing each edge $ij$ with the corresponding complete bipartite graph.

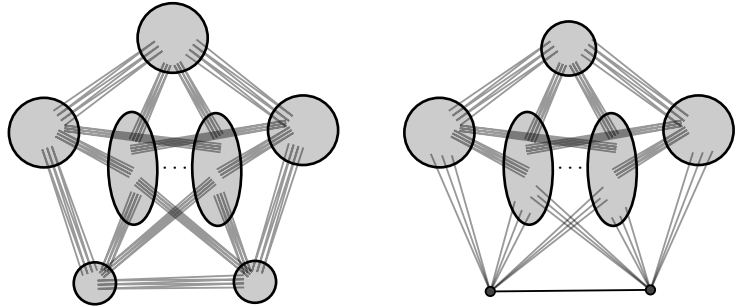
\begin{figure}[htbp]
\centering
\tikzset{every picture/.style={line width=.7pt}} 
\begin{tikzpicture}[x=0.75pt,y=0.75pt,yscale=-0.7,xscale=0.7]
\draw [line width=1pt, fill=black, fill opacity=0.2] (158.92,46.89) .. controls (158.92,33.08) and (170.11,21.89) .. (183.92,21.89) .. controls (197.72,21.89) and (208.92,33.08) .. (208.92,46.89) .. controls (208.92,60.69) and (197.72,71.89) .. (183.92,71.89) .. controls (170.11,71.89) and (158.92,60.69) .. (158.92,46.89) -- cycle ;
\draw  [line width=1pt, fill=black, fill opacity=0.2] (251.92,113.15) .. controls (251.92,99.35) and (263.11,88.15) .. (276.92,88.15) .. controls (290.73,88.15) and (301.92,99.35) .. (301.92,113.15) .. controls (301.92,126.96) and (290.73,138.15) .. (276.92,138.15) .. controls (263.11,138.15) and (251.92,126.96) .. (251.92,113.15) -- cycle ;
\draw  [line width=1pt, fill=black, fill opacity=0.2] (67.15,114.86) .. controls (67.15,101.06) and (78.35,89.86) .. (92.15,89.86) .. controls (105.96,89.86) and (117.15,101.06) .. (117.15,114.86) .. controls (117.15,128.67) and (105.96,139.86) .. (92.15,139.86) .. controls (78.35,139.86) and (67.15,128.67) .. (67.15,114.86) -- cycle ;
\draw  [line width=1pt, fill=black, fill opacity=0.2] (113.39,223.14) .. controls (113.39,214.83) and (120.13,208.08) .. (128.45,208.08) .. controls (136.76,208.08) and (143.5,214.83) .. (143.5,223.14) .. controls (143.5,231.46) and (136.76,238.2) .. (128.45,238.2) .. controls (120.13,238.2) and (113.39,231.46) .. (113.39,223.14) -- cycle ;
\draw  [line width=1pt, fill=black, fill opacity=0.2] (227.58,222.08) .. controls (227.58,213.77) and (234.32,207.03) .. (242.64,207.03) .. controls (250.95,207.03) and (257.69,213.77) .. (257.69,222.08) .. controls (257.69,230.4) and (250.95,237.14) .. (242.64,237.14) .. controls (234.32,237.14) and (227.58,230.4) .. (227.58,222.08) -- cycle ;
\draw [line width=1pt, fill=black, fill opacity=0.2]  (156.37,181.04) .. controls (146.73,181.25) and (138.51,163.28) .. (138.02,140.91) .. controls (137.53,118.55) and (144.96,100.24) .. (154.61,100.03) .. controls (164.25,99.82) and (172.47,117.78) .. (172.96,140.15) .. controls (173.45,162.52) and (166.02,180.82) .. (156.37,181.04) -- cycle ;
\draw [color=black, opacity=0.4]  (98.85,99.23) -- (171.23,49.25) -- (101.53,102.73) -- (173.91,52.75) -- (104.22,106.23) -- (176.59,56.24) -- (106.9,109.72) ;
\draw [color=black, opacity=0.4]  (200.91,47.95) -- (269.51,103) -- (198.33,51.52) -- (266.93,106.57) -- (195.75,55.09) -- (264.35,110.14) -- (193.17,58.66) ;
\draw  [color=black, opacity=0.4] (277.11,125.51) -- (247.55,216.01) -- (273.66,124.53) -- (244.09,215.03) -- (270.2,123.54) -- (240.64,214.04) -- (266.75,122.56) ;
\draw [color=black, opacity=0.4]  (234.96,226.16) -- (130.55,227.09) -- (234.85,223.21) -- (130.45,224.14) -- (234.74,220.26) -- (130.34,221.19) -- (234.63,217.31) ;
\draw [color=black, opacity=0.4]  (119.93,217.67) -- (90.96,126.98) -- (123.31,216.44) -- (94.34,125.75) -- (126.69,215.22) -- (97.72,124.53) -- (130.07,214) ;
\draw  [color=black, opacity=0.4] (153.1,119.59) -- (177.91,64.63) -- (155.39,120.51) -- (180.2,65.56) -- (157.68,121.43) -- (182.49,66.48) -- (159.98,122.36) ;
\draw  [color=black, opacity=0.4] (213.41,119.62) -- (185.04,66.42) -- (215.54,118.37) -- (187.17,65.17) -- (217.67,117.12) -- (189.3,63.92) -- (219.8,115.87) ;
\draw [color=black, opacity=0.4]  (153.03,147) -- (100.37,117.63) -- (154.14,144.79) -- (101.48,115.42) -- (155.26,142.58) -- (102.6,113.22) -- (156.37,140.38) ;
\draw [color=black, opacity=0.4]  (215.37,141.38) -- (267.4,110.91) -- (216.7,143.46) -- (268.73,112.99) -- (218.04,145.54) -- (270.07,115.07) -- (219.37,147.62) ;
\draw  [color=black, opacity=0.4] (161.42,159.83) -- (137.56,215.2) -- (159.11,158.95) -- (135.25,214.32) -- (156.8,158.06) -- (132.95,213.44) -- (154.5,157.18) ;
\draw [color=black, opacity=0.4]  (220.33,157.38) -- (239.61,214.51) -- (218.03,158.26) -- (237.3,215.39) -- (215.72,159.15) -- (234.99,216.28) -- (213.41,160.03) ;
\draw  [color=black, opacity=0.4] (214.89,151.85) -- (134.53,219) -- (213.44,150.19) -- (133.09,217.34) -- (211.99,148.53) -- (131.64,215.68) -- (210.55,146.87) ;
\draw [color=black, opacity=0.4]  (160.87,147.19) -- (240.64,215.04) -- (159.48,148.9) -- (239.25,216.75) -- (158.09,150.61) -- (237.86,218.45) -- (156.7,152.31) ;
\draw  [color=black, opacity=0.4] (218,129) -- (106.43,113.67) -- (218.24,127.02) -- (106.66,111.7) -- (218.47,125.05) -- (106.9,109.72) -- (218.71,123.07) ;
\draw  [color=black, opacity=0.4] (153.49,124.34) -- (264.64,106.22) -- (153.84,126.3) -- (265,108.18) -- (154.2,128.25) -- (265.35,110.14) -- (154.55,130.21) ;
\draw  [line width=1pt, fill=black, fill opacity=0.2] (216.25,181.88) .. controls (206.61,182.09) and (198.39,164.13) .. (197.9,141.76) .. controls (197.41,119.39) and (204.84,101.09) .. (214.49,100.88) .. controls (224.13,100.67) and (232.35,118.63) .. (232.84,141) .. controls (233.33,163.37) and (225.9,181.67) .. (216.25,181.88) -- cycle ;
\draw  [line width=1pt, fill=black, fill opacity=0.2] (446.92,54.44) .. controls (446.92,43.71) and (455.62,35) .. (466.36,35) .. controls (477.1,35) and (485.8,43.71) .. (485.8,54.44) .. controls (485.8,65.18) and (477.1,73.89) .. (466.36,73.89) .. controls (455.62,73.89) and (446.92,65.18) .. (446.92,54.44) -- cycle ;
\draw [line width=1pt, fill=black, fill opacity=0.2]  (533.92,113.15) .. controls (533.92,99.35) and (545.11,88.15) .. (558.92,88.15) .. controls (572.73,88.15) and (583.92,99.35) .. (583.92,113.15) .. controls (583.92,126.96) and (572.73,138.15) .. (558.92,138.15) .. controls (545.11,138.15) and (533.92,126.96) .. (533.92,113.15) -- cycle ;
\draw  [line width=1pt, fill=black, fill opacity=0.2] (349.15,114.86) .. controls (349.15,101.06) and (360.35,89.86) .. (374.15,89.86) .. controls (387.96,89.86) and (399.15,101.06) .. (399.15,114.86) .. controls (399.15,128.67) and (387.96,139.86) .. (374.15,139.86) .. controls (360.35,139.86) and (349.15,128.67) .. (349.15,114.86) -- cycle ;
\draw  [line width=1pt, fill=black, fill opacity=0.2] (438.37,181.04) .. controls (428.73,181.25) and (420.51,163.28) .. (420.02,140.91) .. controls (419.53,118.55) and (426.96,100.24) .. (436.61,100.03) .. controls (446.25,99.82) and (454.47,117.78) .. (454.96,140.15) .. controls (455.45,162.52) and (448.02,180.82) .. (438.37,181.04) -- cycle ;
\draw  [color=black, opacity=0.4] (380.85,99.23) -- (453.23,49.25) -- (383.53,102.73) -- (455.91,52.75) -- (386.22,106.23) -- (458.59,56.24) -- (388.9,109.72) ;
\draw  [color=black, opacity=0.4] (482.91,47.95) -- (551.51,103) -- (480.33,51.52) -- (548.93,106.57) -- (477.75,55.09) -- (546.35,110.14) -- (475.17,58.66) ;
\draw  [color=black, opacity=0.4] (435.1,119.59) -- (459.91,64.63) -- (437.39,120.51) -- (462.2,65.56) -- (439.68,121.43) -- (464.49,66.48) -- (441.98,122.36) ;
\draw  [color=black, opacity=0.4] (495.41,119.62) -- (467.04,66.42) -- (497.54,118.37) -- (469.17,65.17) -- (499.67,117.12) -- (471.3,63.92) -- (501.8,115.87) ;
\draw  [color=black, opacity=0.4] (435.03,147) -- (382.37,117.63) -- (436.14,144.79) -- (383.48,115.42) -- (437.26,142.58) -- (384.6,113.22) -- (438.37,140.38) ;
\draw  [color=black, opacity=0.4] (497.37,141.38) -- (549.4,110.91) -- (498.7,143.46) -- (550.73,112.99) -- (500.04,145.54) -- (552.07,115.07) -- (501.37,147.62) ;
\draw  [color=black, opacity=0.4] (500,129) -- (388.43,113.67) -- (500.24,127.02) -- (388.66,111.7) -- (500.47,125.05) -- (388.9,109.72) -- (500.71,123.07) ;
\draw  [color=black, opacity=0.4] (435.49,124.34) -- (546.64,106.22) -- (435.84,126.3) -- (547,108.18) -- (436.2,128.25) -- (547.35,110.14) -- (436.55,130.21) ;
\draw [line width=1pt, fill=black, fill opacity=0.2]  (498.25,181.88) .. controls (488.61,182.09) and (480.39,164.13) .. (479.9,141.76) .. controls (479.41,119.39) and (486.84,101.09) .. (496.49,100.88) .. controls (506.13,100.67) and (514.35,118.63) .. (514.84,141) .. controls (515.33,163.37) and (507.9,181.67) .. (498.25,181.88) -- cycle ;
\draw   (410.45,229) -- (524.64,227.88) ;
\draw  [color=black, opacity=0.4]  (368,130.12) -- (410.45,229) ;
\draw  [color=black, opacity=0.4]  (383,131.18) -- (410.45,229) ;
\draw  [color=black, opacity=0.4]  (428,164.09) -- (410.45,229) ;
\draw  [color=black, opacity=0.4]  (439,172.59) -- (410.45,229) ;
\draw  [color=black, opacity=0.4]  (497,175.77) -- (524.64,227.88) ;
\draw  [color=black, opacity=0.4]  (507,168.34) -- (524.64,227.88) ;
\draw  [color=black, opacity=0.4]  (555,129.06) -- (524.64,227.88) ;
\draw  [color=black, opacity=0.4]  (569,128) -- (524.64,227.88) ;
\draw  [color=black, opacity=0.4]  (376,131.18) -- (410.45,229) ;
\draw  [color=black, opacity=0.4]  (433,170.46) -- (410.45,229) ;
\draw [color=black, opacity=0.4]   (503,174.71) -- (524.64,227.88) ;
\draw  [color=black, opacity=0.4]  (562,129.06) -- (524.64,227.88) ;
\draw [color=black, opacity=0.4]   (491,154) -- (410.45,229) ;
\draw  [color=black, opacity=0.4]  (489,162) -- (410.45,229) ;
\draw  [color=black, opacity=0.4]  (494,163) -- (410.45,229) ;
\draw  [color=black, opacity=0.4]  (443,153) -- (524.64,227.88) ;
\draw [color=black, opacity=0.4]   (444,160) -- (524.64,227.88) ;
\draw  [color=black, opacity=0.4]  (439,161) -- (524.64,227.88) ;

\draw (174,135) node [anchor=north west][inner sep=0.75pt]   [align=left] {\scriptsize{$\cdots$}};
\draw (456,135) node [anchor=north west][inner sep=0.75pt]   [align=left] {\scriptsize{$\cdots$}};
\draw [fill=uuuuuu] (410.45,229) circle (2.5pt);
\draw [fill=uuuuuu] (524.64,227.88) circle (2.5pt);
\end{tikzpicture}
\caption{Extremal constructions for Theorem~\ref{THM:max-deg-AES-clique}.} 
\label{Fig:W53}
\end{figure}

Let $W_{r}$ denote the graph with vertex set $[r+3]$ and edge set 
\begin{align*}
    \{12, 23, 34, 45, 51\}
    \cup \{ij \colon i \in [5],~j \in [6, r+3]\}
    \cup \{ij \colon \{i,j\} \subseteq [6, r+3]\}, 
\end{align*}
that is, $W_{r}$ is join of a $5$-cycle $C_{5}$ and a complete graph $K_{r-2}$. 

The extremal construction for Theorem~\ref{THM:max-deg-AES-clique} in the case $\Delta(G) < \frac{2r-2}{2r-1}n$ is given by the blowup $W_r[x_1, \ldots, x_{r+3}]$ (see the left illustration in Figure~\ref{Fig:W53}), where $\alpha \coloneqq \frac{\Delta(G)}{n} - \frac{3r-4}{3r-1}$ (noting that Theorem~\ref{THM:max-deg-AES-clique} implicitly requires $\Delta(G) > \frac{3r-4}{3r-1} n$), and 
\begin{align*}
    x_1 = x_2 = x_5 & \coloneqq \left(\tfrac{1}{3r-1}+\tfrac{r \alpha}{3r-2}\right)n, \\
    x_3 = x_4 & \coloneqq \left(\tfrac{1}{3r-1}-\tfrac{(2r-1)\alpha}{3r-2}\right)n, \\ 
    x_6 = \cdots = x_{r+3} & \coloneqq \left(\tfrac{3}{3r-1} + \tfrac{\alpha}{3r-2}\right)n. 
\end{align*}

The extremal construction for Theorem~\ref{THM:max-deg-AES-clique} in the case $\Delta \ge \frac{2r-2}{2r-1}n$ is given by the blowup  $W_r[y_1, \ldots, y_{r+3}]$ (see the right illustration in Figure~\ref{Fig:W53}), where $\beta \coloneqq \frac{\Delta(G)}{n} - \frac{2r-2}{2r-1}$, and 
\begin{align*}
    y_1 & \coloneqq \left(\tfrac{1}{2r-1} - \beta\right)n-2, \\
    y_2 = y_5 & \coloneqq \left(\tfrac{1}{2r-1} + \tfrac{\beta}{2r-2}\right)n, \\
    y_3 = y_4 & \coloneqq 1, \\
    y_6 = \cdots = y_{r+3} & \coloneqq \left(\tfrac{2}{2r-1} + \tfrac{\beta}{r-1}\right)n. 
\end{align*}

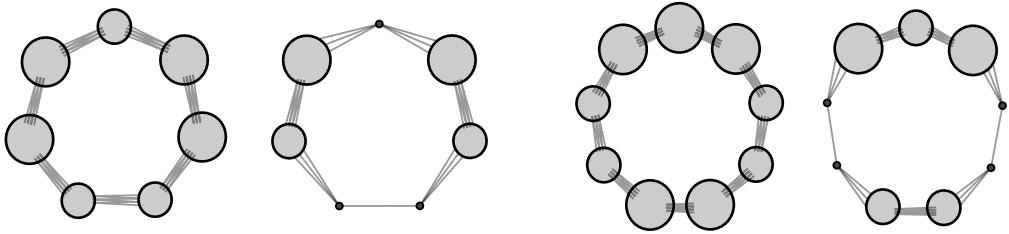
\begin{figure}[htbp]
\centering
\tikzset{every picture/.style={line width=.7pt}} 
\begin{tikzpicture}[x=0.75pt,y=0.75pt,yscale=-0.85,xscale=0.85]
\draw [line width=1pt, fill=black, fill opacity=0.2]  (129.57,44.42) .. controls (135.51,39.37) and (144.27,40.28) .. (149.14,46.45) .. controls (154,52.63) and (153.13,61.73) .. (147.19,66.79) .. controls (141.24,71.84) and (132.48,70.94) .. (127.62,64.76) .. controls (122.76,58.58) and (123.63,49.48) .. (129.57,44.42) -- cycle ;
\draw [line width=1pt, fill=black, fill opacity=0.2]  (140.21,90) .. controls (146.15,84.95) and (154.91,85.86) .. (159.77,92.03) .. controls (164.64,98.21) and (163.76,107.31) .. (157.82,112.37) .. controls (151.88,117.42) and (143.12,116.52) .. (138.26,110.34) .. controls (133.39,104.16) and (134.27,95.06) .. (140.21,90) -- cycle ;
\draw [line width=1pt, fill=black, fill opacity=0.2]  (38.82,91.43) .. controls (44.76,86.38) and (53.52,87.28) .. (58.39,93.46) .. controls (63.25,99.64) and (62.38,108.74) .. (56.43,113.8) .. controls (50.49,118.85) and (41.73,117.94) .. (36.87,111.77) .. controls (32.01,105.59) and (32.88,96.49) .. (38.82,91.43) -- cycle ;
\draw [line width=1pt, fill=black, fill opacity=0.2]  (48.27,45.57) .. controls (54.21,40.51) and (62.97,41.42) .. (67.83,47.6) .. controls (72.7,53.77) and (71.82,62.88) .. (65.88,67.93) .. controls (59.94,72.99) and (51.18,72.08) .. (46.31,65.9) .. controls (41.45,59.73) and (42.32,50.62) .. (48.27,45.57) -- cycle ;
\draw [line width=1pt, fill=black, fill opacity=0.2]  (115.2,130.42) .. controls (119.36,126.88) and (125.49,127.52) .. (128.89,131.84) .. controls (132.3,136.16) and (131.68,142.54) .. (127.52,146.08) .. controls (123.36,149.62) and (117.23,148.98) .. (113.83,144.66) .. controls (110.42,140.33) and (111.04,133.96) .. (115.2,130.42) -- cycle ;
\draw [line width=1pt, fill=black, fill opacity=0.2]  (70.07,131.06) .. controls (74.23,127.52) and (80.37,128.15) .. (83.77,132.48) .. controls (87.17,136.8) and (86.56,143.17) .. (82.4,146.71) .. controls (78.24,150.25) and (72.11,149.61) .. (68.71,145.29) .. controls (65.3,140.97) and (65.91,134.59) .. (70.07,131.06) -- cycle ;
\draw [line width=1pt, fill=black, fill opacity=0.2]  (91.3,28) .. controls (95.46,24.46) and (101.59,25.1) .. (104.99,29.42) .. controls (108.4,33.75) and (107.78,40.12) .. (103.63,43.66) .. controls (99.47,47.2) and (93.33,46.56) .. (89.93,42.24) .. controls (86.52,37.91) and (87.14,31.54) .. (91.3,28) -- cycle ;
\draw [color=black, opacity=0.4]  (105.38,35.05) -- (130.25,47.31) -- (104.68,36.9) -- (129.55,49.16) -- (103.98,38.76) -- (128.85,51.02) -- (103.28,40.62) ;
\draw [color=black, opacity=0.4]  (143.52,64.35) -- (147.92,92.6) -- (141.65,64.8) -- (146.05,93.05) -- (139.78,65.26) -- (144.18,93.51) -- (137.91,65.72) ;
\draw [color=black, opacity=0.4]  (144.96,111.61) -- (127.52,133.75) -- (143.39,110.45) -- (125.96,132.6) -- (141.82,109.3) -- (124.39,131.45) -- (140.25,108.15) ;
\draw [color=black, opacity=0.4]  (112.21,141.6) -- (84.72,140.11) -- (112.18,139.61) -- (84.68,138.12) -- (112.14,137.61) -- (84.65,136.12) -- (112.11,135.62) ;
\draw [color=black, opacity=0.4]  (68.08,136.13) -- (50.72,113.92) -- (69.48,134.76) -- (52.13,112.55) -- (70.89,133.4) -- (53.53,111.19) -- (72.29,132.04) ;
\draw [color=black, opacity=0.4]  (44.62,92.96) -- (52.35,65.49) -- (46.5,93.39) -- (54.23,65.92) -- (48.38,93.81) -- (56.1,66.34) -- (50.26,94.24) ;
\draw  [color=black, opacity=0.4] (65.01,47.93) -- (90.32,36.68) -- (65.89,49.71) -- (91.2,38.45) -- (66.77,51.49) -- (92.08,40.23) -- (67.64,53.26) ;
\draw  [line width=1pt, fill=black, fill opacity=0.2] (201.64,44.51) .. controls (207.58,39.46) and (216.34,40.37) .. (221.2,46.54) .. controls (226.07,52.72) and (225.19,61.82) .. (219.25,66.88) .. controls (213.31,71.93) and (204.55,71.03) .. (199.69,64.85) .. controls (194.82,58.67) and (195.7,49.57) .. (201.64,44.51) -- cycle ;
\draw [line width=1pt, fill=black, fill opacity=0.2]  (286.87,44.38) .. controls (292.81,39.32) and (301.57,40.23) .. (306.43,46.41) .. controls (311.29,52.58) and (310.42,61.69) .. (304.48,66.74) .. controls (298.54,71.8) and (289.78,70.89) .. (284.91,64.71) .. controls (280.05,58.54) and (280.92,49.43) .. (286.87,44.38) -- cycle ;
\draw [line width=1pt, fill=black, fill opacity=0.2]  (193.83,95.81) .. controls (197.99,92.27) and (204.12,92.91) .. (207.52,97.23) .. controls (210.93,101.55) and (210.31,107.93) .. (206.15,111.47) .. controls (202,115.01) and (195.86,114.37) .. (192.46,110.05) .. controls (189.05,105.72) and (189.67,99.35) .. (193.83,95.81) -- cycle ;
\draw [line width=1pt, fill=black, fill opacity=0.2]  (300.1,95.64) .. controls (304.26,92.1) and (310.39,92.74) .. (313.8,97.06) .. controls (317.2,101.39) and (316.59,107.76) .. (312.43,111.3) .. controls (308.27,114.84) and (302.14,114.2) .. (298.74,109.88) .. controls (295.33,105.56) and (295.94,99.18) .. (300.1,95.64) -- cycle ;
\draw  [color=black, opacity=0.4]  (253.03,34.3) -- (286.87,44.38) ;
\draw  [color=black, opacity=0.4]  (253.03,34.3) -- (284.5,47.85) ;
\draw  [color=black, opacity=0.4]  (253.03,34.3) -- (282.28,51.46) ;
\draw  [color=black, opacity=0.4]  (276.83,141.95) -- (302.3,112.59) ;
\draw  [color=black, opacity=0.4]  (276.83,141.95) -- (297.57,108.11) ;
\draw [color=black, opacity=0.4]   (276.83,141.95) -- (299.85,110.46) ;
\draw  [color=black, opacity=0.4]  (229.54,142.03) -- (203.3,113.17) ;
\draw [color=black, opacity=0.4]   (229.54,142.03) -- (206.16,111.47) ;
\draw  [color=black, opacity=0.4]  (229.54,142.03) -- (208.31,108.54) ;
\draw [color=black, opacity=0.4]   (253.03,34.3) -- (217.21,43.22) ;
\draw  [color=black, opacity=0.4]  (253.03,34.3) -- (221.2,46.54) ;
\draw  [color=black, opacity=0.4]  (253.03,34.3) -- (223.32,50.74) ;
\draw  [color=black, opacity=0.4]  (276.83,141.95) -- (229.54,142.03) ;
\draw [fill=uuuuuu] (253.03,34.3) circle (1.5pt);
\draw [fill=uuuuuu] (276.83,141.95) circle (1.5pt);
\draw [fill=uuuuuu] (229.54,142.03) circle (1.5pt);
%
\draw  [color=black, opacity=0.4] (197.93,94.47) -- (205.25,66.89) -- (199.82,94.87) -- (207.13,67.28) -- (201.7,95.27) -- (209.02,67.68) -- (203.58,95.66) ;
\draw [color=black, opacity=0.4]  (302.88,96.22) -- (297.07,68.25) -- (304.73,95.66) -- (298.91,67.69) -- (306.57,95.11) -- (300.76,67.14) -- (308.42,94.55) ;
\draw [line width=1pt, fill=black, fill opacity=0.2]  (415.28,36.59) .. controls (415.28,28.53) and (421.53,22) .. (429.23,22) .. controls (436.93,22) and (443.17,28.53) .. (443.17,36.59) .. controls (443.17,44.65) and (436.93,51.19) .. (429.23,51.19) .. controls (421.53,51.19) and (415.28,44.65) .. (415.28,36.59) -- cycle ;
\draw [line width=1pt, fill=black, fill opacity=0.2]  (448.5,49.08) .. controls (448.5,41.02) and (454.74,34.49) .. (462.44,34.49) .. controls (470.14,34.49) and (476.38,41.02) .. (476.38,49.08) .. controls (476.38,57.14) and (470.14,63.68) .. (462.44,63.68) .. controls (454.74,63.68) and (448.5,57.14) .. (448.5,49.08) -- cycle ;
\draw [line width=1pt, fill=black, fill opacity=0.2]  (433.35,141.25) .. controls (433.35,133.19) and (439.59,126.66) .. (447.29,126.66) .. controls (454.99,126.66) and (461.23,133.19) .. (461.23,141.25) .. controls (461.23,149.31) and (454.99,155.85) .. (447.29,155.85) .. controls (439.59,155.85) and (433.35,149.31) .. (433.35,141.25) -- cycle ;
\draw [line width=1pt, fill=black, fill opacity=0.2]  (398.06,141.41) .. controls (398.06,133.35) and (404.3,126.81) .. (412,126.81) .. controls (419.7,126.81) and (425.94,133.35) .. (425.94,141.41) .. controls (425.94,149.47) and (419.7,156) .. (412,156) .. controls (404.3,156) and (398.06,149.47) .. (398.06,141.41) -- cycle ;
\draw [line width=1pt, fill=black, fill opacity=0.2]  (382.17,49.37) .. controls (382.17,41.31) and (388.42,34.78) .. (396.12,34.78) .. controls (403.82,34.78) and (410.06,41.31) .. (410.06,49.37) .. controls (410.06,57.43) and (403.82,63.97) .. (396.12,63.97) .. controls (388.42,63.97) and (382.17,57.43) .. (382.17,49.37) -- cycle ;
\draw [line width=1pt, fill=black, fill opacity=0.2]  (470.45,80.99) .. controls (470.45,75.35) and (474.82,70.78) .. (480.21,70.78) .. controls (485.6,70.78) and (489.97,75.35) .. (489.97,80.99) .. controls (489.97,86.63) and (485.6,91.21) .. (480.21,91.21) .. controls (474.82,91.21) and (470.45,86.63) .. (470.45,80.99) -- cycle ;
\draw [line width=1pt, fill=black, fill opacity=0.2]  (464.47,117.39) .. controls (464.47,111.75) and (468.84,107.18) .. (474.23,107.18) .. controls (479.62,107.18) and (483.99,111.75) .. (483.99,117.39) .. controls (483.99,123.03) and (479.62,127.61) .. (474.23,127.61) .. controls (468.84,127.61) and (464.47,123.03) .. (464.47,117.39) -- cycle ;
\draw [line width=1pt, fill=black, fill opacity=0.2]  (375.11,117.78) .. controls (375.11,112.14) and (379.48,107.57) .. (384.87,107.57) .. controls (390.26,107.57) and (394.63,112.14) .. (394.63,117.78) .. controls (394.63,123.43) and (390.26,128) .. (384.87,128) .. controls (379.48,128) and (375.11,123.43) .. (375.11,117.78) -- cycle ;
\draw [line width=1pt, fill=black, fill opacity=0.2]  (368.84,81.44) .. controls (368.84,75.79) and (373.21,71.22) .. (378.6,71.22) .. controls (383.99,71.22) and (388.36,75.79) .. (388.36,81.44) .. controls (388.36,87.08) and (383.99,91.65) .. (378.6,91.65) .. controls (373.21,91.65) and (368.84,87.08) .. (368.84,81.44) -- cycle ;
\draw  [color=black, opacity=0.4] (403.3,42.67) -- (418.84,36.84) -- (404.06,44.31) -- (419.59,38.47) -- (404.81,45.94) -- (420.34,40.11) -- (405.56,47.57) ;
\draw  [color=black, opacity=0.4] (440.02,35.93) -- (454.22,44.72) -- (439.3,37.58) -- (453.5,46.37) -- (438.58,39.22) -- (452.78,48.02) -- (437.86,40.87) ;
\draw [color=black, opacity=0.4]  (437.97,145.95) -- (421.5,144.76) -- (437.9,144.14) -- (421.44,142.95) -- (437.84,142.33) -- (421.38,141.14) -- (437.78,140.52) ;
\draw [color=black, opacity=0.4]  (469.43,56.75) -- (479.63,75.19) -- (467.9,57.88) -- (478.1,76.32) -- (466.37,59.02) -- (476.57,77.46) -- (464.84,60.15) ;
\draw [color=black, opacity=0.4]  (481.53,89.09) -- (477.4,109.95) -- (479.67,88.87) -- (475.53,109.73) -- (477.8,88.65) -- (473.67,109.51) -- (475.94,88.43) ;
\draw [color=black, opacity=0.4]  (473.14,124.2) -- (456.63,136.66) -- (471.91,122.72) -- (455.4,135.18) -- (470.68,121.24) -- (454.17,133.7) -- (469.45,119.76) ;
\draw [color=black, opacity=0.4]  (402.28,137.86) -- (387.54,123.16) -- (403.44,136.32) -- (388.7,121.62) -- (404.6,134.78) -- (389.86,120.09) -- (405.77,133.24) ;
\draw [color=black, opacity=0.4]  (380.71,110.01) -- (378.01,88.89) -- (382.54,109.57) -- (379.84,88.45) -- (384.37,109.13) -- (381.66,88.01) -- (386.19,108.69) ;
\draw [color=black, opacity=0.4]  (378.35,74.65) -- (389.18,56.61) -- (380.02,75.54) -- (390.85,57.5) -- (381.69,76.42) -- (392.52,58.38) -- (383.36,77.31) ;
\draw  [line width=1pt, fill=black, fill opacity=0.2] (558.42,36.61) .. controls (558.42,30.97) and (562.79,26.39) .. (568.18,26.39) .. controls (573.57,26.39) and (577.94,30.97) .. (577.94,36.61) .. controls (577.94,42.25) and (573.57,46.82) .. (568.18,46.82) .. controls (562.79,46.82) and (558.42,42.25) .. (558.42,36.61) -- cycle ;
\draw [line width=1pt, fill=black, fill opacity=0.2]  (538.99,142.45) .. controls (538.99,136.8) and (543.36,132.23) .. (548.75,132.23) .. controls (554.14,132.23) and (558.51,136.8) .. (558.51,142.45) .. controls (558.51,148.09) and (554.14,152.66) .. (548.75,152.66) .. controls (543.36,152.66) and (538.99,148.09) .. (538.99,142.45) -- cycle ;
\draw [line width=1pt, fill=black, fill opacity=0.2]  (574.74,143.02) .. controls (574.74,137.38) and (579.11,132.81) .. (584.5,132.81) .. controls (589.9,132.81) and (594.27,137.38) .. (594.27,143.02) .. controls (594.27,148.66) and (589.9,153.24) .. (584.5,153.24) .. controls (579.11,153.24) and (574.74,148.66) .. (574.74,143.02) -- cycle ;
\draw [line width=1pt, fill=black, fill opacity=0.2]  (520.45,48.87) .. controls (520.45,40.81) and (526.69,34.28) .. (534.39,34.28) .. controls (542.09,34.28) and (548.34,40.81) .. (548.34,48.87) .. controls (548.34,56.93) and (542.09,63.46) .. (534.39,63.46) .. controls (526.69,63.46) and (520.45,56.93) .. (520.45,48.87) -- cycle ;
\draw [line width=1pt, fill=black, fill opacity=0.2]  (587.65,49.95) .. controls (587.65,41.89) and (593.9,35.36) .. (601.6,35.36) .. controls (609.3,35.36) and (615.54,41.89) .. (615.54,49.95) .. controls (615.54,58.01) and (609.3,64.54) .. (601.6,64.54) .. controls (593.9,64.54) and (587.65,58.01) .. (587.65,49.95) -- cycle ;
\draw  [color=black, opacity=0.4] (576.83,35.44) -- (591.03,44.24) -- (576.11,37.09) -- (590.31,45.88) -- (575.39,38.74) -- (589.59,47.53) -- (574.67,40.38) ;
\draw  [color=black, opacity=0.4] (561.5,40.72) -- (545.69,45.67) -- (560.83,39.05) -- (545.02,44) -- (560.16,37.37) -- (544.35,42.33) -- (559.5,35.7) ;
\draw  [color=black, opacity=0.4]  (516.04,80.99) -- (521.71,117.95) ;
\draw [color=black, opacity=0.4]   (619,82.65) -- (612.25,119.4) ;
\draw [color=black, opacity=0.4]   (521.3,53.71) -- (516.04,80.99) ;
\draw  [color=black, opacity=0.4]  (524.36,58.38) -- (516.04,80.99) ;
\draw [color=black, opacity=0.4]   (527.43,61.59) -- (516.04,80.99) ;
\draw  [color=black, opacity=0.4]  (538.99,142.45) -- (521.71,117.95) ;
\draw  [color=black, opacity=0.4]  (539.98,138.35) -- (521.71,117.95) ;
\draw  [color=black, opacity=0.4]  (541.65,134.85) -- (521.71,117.95) ;
\draw  [color=black, opacity=0.4]  (591.29,135.44) -- (612.25,119.4) ;
\draw [color=black, opacity=0.4]   (593.24,138.06) -- (612.25,119.4) ;
\draw  [color=black, opacity=0.4]  (594.36,141.56) -- (612.25,119.4) ;
\draw [color=black, opacity=0.4]   (610.53,61.01) -- (619,82.65) ;
\draw [color=black, opacity=0.4]   (613.32,57.51) -- (619,82.65) ;
\draw  [color=black, opacity=0.4]  (607.75,62.76) -- (619,82.65) ;
\draw [color=black, opacity=0.4]  (580.14,147.4) -- (555.72,146.83) -- (580.09,145.89) -- (555.67,145.32) -- (580.03,144.39) -- (555.62,143.81) -- (579.98,142.88) ;
\draw [fill=uuuuuu] (516.04,80.99) circle (1.5pt);
\draw [fill=uuuuuu] (521.71,117.95) circle (1.5pt);
\draw [fill=uuuuuu] (619,82.65) circle (1.5pt);
\draw [fill=uuuuuu] (612.25,119.4) circle (1.5pt);
\end{tikzpicture}
\caption{Extremal constructions for Theorem~\ref{THM:max-deg-AES-odd-cycle} when $k = 2$ and $k = 3$.} 
\label{Fig:C7C9}
\end{figure}

Recall that $C_{2k+3}$ is the graph with vertex set $[2k+3]$ and edge set 
\begin{align*}
    \big\{ \{1,2\},~\{2,3\},~\ldots,~\{2k+2, 2k+3\},~\{2k+3,1\} \big\}. 
\end{align*}

The extremal construction for Theorem~\ref{THM:max-deg-AES-odd-cycle} in the case $\Delta(G) < \frac{2n}{k+2}$ is given by the blowup $C_{2k+3}[x_1, \ldots, x_{2k+3}]$, where $\alpha \coloneqq \frac{k+2}{2}\left(\frac{\Delta(G)}{n}-\frac{2}{2k+3}\right)$ (noting that Theorem~\ref{THM:max-deg-AES-odd-cycle} implicitly requires $\Delta(G) > \frac{2 n}{2k+3}$), and for $i \in [2k+3]$,
\begin{align*}
    x_i 
    \coloneqq 
    \begin{cases}
        \big(\tfrac{1}{2k+3} +\tfrac{\alpha}{k+2} \big)n, & \quad\text{if}\quad i \equiv 1 \mbox{ or } 2 \Mod{4}, \\[0.3em]
        \big(\tfrac{1}{2k+3} - \tfrac{\alpha}{k+1} \big)n, & \quad\text{if}\quad i \equiv 0 \mbox{ or } 3 \Mod{4}.  
    \end{cases}
\end{align*}

Next, we describe the construction for the case $\Delta(G)\ge \frac{2n}{k+2}$. 
Let $\beta \coloneqq \frac{\Delta(G)}{n}-\frac{2}{k+2}$. 
If $k \equiv 0 \Mod{2}$, define 
\begin{align*}
    y_i 
    \coloneqq 
    \begin{cases}
        \big(\frac{1}{k+2} + \frac{\beta}{2} \big)n, & \quad\text{if}\quad i \in \{1,3\}, \\
        1, & \quad\text{if}\quad i = 2, \\
        \big(\tfrac{k}{k(k+2)} -\frac{\beta}{k}\big)n-\frac{2k+1}{k}, & \quad\text{if}\quad i\in [4, 2k+3] \mbox{ and } i \equiv 1 \mbox{ or } 2 \Mod{4}, \\
        1, & \quad\text{if}\quad i\in [4, 2k+3] \mbox{ and } i \equiv 0 \mbox{ or } 3 \Mod{4}. 
    \end{cases}
\end{align*}

If $k \equiv 1 \Mod{2}$, define 
\begin{align*}
    y_i 
    \coloneqq 
    \begin{cases}
        \big(\frac{1}{k+2} + \frac{\beta}{2} \big)n, & \quad\text{if}\quad i \in \{1,3\}, \\
        \big(\tfrac{k}{k(k+2)} -\frac{\beta}{k}\big)n-\frac{2k+1}{k}, & \quad\text{if}\quad i = 2, \\
        1, & \quad\text{if}\quad i\in [4, 2k+3] \mbox{ and } i \equiv 0 \mbox{ or } 1 \Mod{4}, \\
        \big(\tfrac{k}{(k-1)(k+2)} -\frac{\beta}{k-1}\big)n-\frac{2k}{k-1}, & \quad\text{if}\quad i\in [4, 2k+3] \mbox{ and } i \equiv 2 \mbox{ or } 3 \Mod{4}. 
    \end{cases}
\end{align*}

The extremal construction for Theorem~\ref{THM:max-deg-AES-odd-cycle} in the case $\Delta(G)\ge \frac{2n}{k+2}$ is given by the blowup $C_{2k+3}[y_1, \ldots, y_{2k+3}]$.

In Section~\ref{SEC:proof-clique}, we present the proof of Theorem~\ref{THM:max-deg-AES-clique}, and in Section~\ref{SEC:proof-odd-cycle}, we present the proof of Theorem~\ref{THM:max-deg-AES-odd-cycle}.

\section{Proof of Theorem~\ref{THM:max-deg-AES-clique}}\label{SEC:proof-clique}
In this section, we present the proof of Theorem~\ref{THM:max-deg-AES-clique}.
Our proof proceeds by induction on $r$. 
We begin with the base case $r=2$.

\begin{lemma}\label{LEMMA:max-deg-triangle}
    Suppose that $G$ is an $n$-vertex $K_{3}$-free graph satisfying 
    \begin{align*}
        \delta(G) 
        > \min\left\{\frac{n}{2}-\frac{\Delta(G)}{4},~n-\Delta(G)-1\right\}. 
    \end{align*}
    Then $G$ is bipartite. 
\end{lemma}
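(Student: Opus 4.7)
I will approach the proof by contrapositive: assuming $G$ is triangle-free and non-bipartite, I aim to show that both $\delta \le n-\Delta-1$ and $\delta \le n/2-\Delta/4$ hold, which contradicts the hypothesis $\delta > \min\{n/2-\Delta/4,\,n-\Delta-1\}$.

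For $\delta \le n-\Delta-1$: triangle-freeness gives $N(x) \cap N(y) = \emptyset$ for every edge $xy$, hence $\deg(x)+\deg(y) \le n$. Applied to a maximum-degree vertex $u$ and a neighbour, this yields $\delta \le n-\Delta$. If equality held, set $A = N(u)$ and $B = V(G) \setminus (A \cup \{u\})$, so $|A| = \Delta$ and $|B| = \delta - 1$. Since $A$ is independent and $a \sim u$, each $a \in A$ would have exactly $\delta - 1 = |B|$ neighbours in $B$, i.e., $a$ would be joined to all of $B$. Any edge inside $B$ would then form a triangle with any $a \in A$, so $B$ would be independent and $G$ bipartite with parts $A$ and $\{u\} \cup B$---contradicting non-bipartiteness. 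Hence $\delta \le n-\Delta-1$.

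For $\delta \le n/2-\Delta/4$: the plan is to produce an induced $C_5$ through $u$. From such a cycle the classical bound $\sum_{v \in C_5}\deg(v) \le 2n$ holds (triangle-freeness forces any external vertex to have at most two cycle-neighbours, at cyclic distance $2$), so with $\deg(u) = \Delta$ and the lower bound $\delta$ on the other four degrees we get $\Delta + 4\delta \le 2n$. To produce the $C_5$ I would BFS from $u$ in its component, which must be non-bipartite: otherwise the non-bipartite component has size at most $n-\Delta-1$, and the odd-cycle degree-sum bound inside it forces $\delta \le 2(n-\Delta-1)/5$, violating $\delta > n/2-\Delta/4$. Any edge inside the distance-$2$ layer $V_2$ then immediately closes up through two common neighbours in $V_1 = A$ to an induced $C_5$ through $u$. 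If instead $V_2$ is independent, then $G[V_0 \cup V_1 \cup V_2]$ is bipartite; the lower bound $|V_2| \ge \delta - 1$ from counting $A$-to-$V_2$ edges forces $|V_{\ge 3}|$ to be very small, and a rerouting argument shows that any $C_5$ in $G$ meeting $N(u)$ in two cyclically-distance-$2$ vertices would again yield an induced $C_5$ through $u$---so every $C_5$ in $G$ must satisfy $|N(u) \cap C_5| \le 1$. Together with the degenerate case in which $G$ contains no $C_5$ at all (handled by the analogous $\sum \le n + |C|$ bound for a shortest odd cycle of length $\ge 7$), these ingredients close out the contradiction.

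The main obstacle is exactly this $V_2$-independent subcase: here the bound $\delta > n/2 - \Delta/4$ is used most tightly, and the argument must combine the BFS-layer size estimates with the rerouting restriction $|N(u) \cap C_5| \le 1$ and a careful degree sum along an odd cycle in order to eliminate the possibility.
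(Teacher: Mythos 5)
Your first half is fine: the derivation of $\delta \le n-\Delta-1$ for a non-bipartite triangle-free graph (via $\deg x+\deg y\le n$ on an edge, plus the equality analysis) is correct and is essentially the paper's Case 1 read contrapositively. The problem is the second half. Your route to $4\delta+\Delta\le 2n$ requires a $C_5$ through the maximum-degree vertex $u$, and you actually produce one only when the second BFS layer $V_2$ contains an edge. The subcase you call ``the main obstacle'' ($V_2$ independent) is left genuinely open: the rerouting observation is correct (two vertices of a $C_5$ in $N(u)$ must be at cyclic distance $2$, and rerouting through $u$ gives a $C_5$ through $u$, so in the bad subcase every $C_5$ has at most one vertex in $N(u)$), but by itself it only improves the degree sum along a $C_5$ from $2n$ to $2n-1$, giving $5\delta\le 2n-1$; since $\Delta$ can be as large as roughly $2n/3$ here, this does not contradict $\delta> n/2-\Delta/4$, and no concrete argument is offered to finish. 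Moreover, the auxiliary bound you invoke for the no-$C_5$ subcase, $\sum_{v\in C}\deg(v)\le n+|C|$ for a shortest odd cycle of length $\ge 7$, is false in general: off-cycle vertices may have two neighbours on $C$ at cyclic distance $2$ (a balanced blow-up of $C_7$ gives $2n$). That particular subcase is repairable --- $\sum_{v\in C}\deg(v)\le 2n$ gives $\delta\le 2n/7$, which together with $\delta+\Delta\le n-1$ contradicts $\delta>n/2-\Delta/4$ --- but the repair does not touch the unresolved $V_2$-independent subcase, so the proof as proposed is incomplete.

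For comparison, the paper's proof of this lemma avoids locating any cycle. It fixes an edge $uv$ with $d(u)=\Delta$, takes disjoint independent sets $V_1\supseteq N(v)$, $V_2\supseteq N(u)$ with $|V_1|+|V_2|$ maximum, and notes $|T|\le n-\Delta-\delta$ for $T=V\setminus(V_1\cup V_2)$. If $T\ne\emptyset$, a vertex $w\in T$ has neighbours $v_1\in V_1$, $v_2\in V_2$, and a short local count (using that $N(v_i)\cap N(w)=\emptyset$ and that each vertex of $T$ is adjacent to at most two of $v_1,v_2,w$) gives $d(v_1)+d(v_2)+d(w)\le n+|T|$, hence $3\delta\le 2n-\Delta-\delta$, i.e.\ $\delta\le n/2-\Delta/4$, the desired contradiction. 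You could either adopt this counting step in place of the $C_5$ search, or supply an actual argument for your $V_2$-independent case; as written, that case is a real gap.
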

\begin{proof}[Proof of Lemma~\ref{LEMMA:max-deg-triangle}]
    Let $G$ be an $n$-vertex $K_{3}$-free graph.  
    For convenience, let $V \coloneqq V(G)$, $\Delta \coloneqq \Delta(G)$, and $\delta \coloneqq \delta(G)$. 

    \bigskip 
    
    \textbf{Case 1}: $\delta > n - \Delta -1$. 

    Since both $\delta$ and $\Delta$ are integers, this assumption is equivalent to  $\delta + \Delta \ge n$. 
    Fix a vertex $u \in V$ with $d_{G}(u) = \Delta$, and let $v \in N_{G}(u)$. Since $G$ is $K_{3}$-free, we have $N_{G}(u) \cap N_{G}(v) = \emptyset$. 
    Therefore, 
    \begin{align*}
        |N_{G}(u)| + |N_{G}(v)| 
        \ge \Delta + \delta 
        = n. 
    \end{align*}
    Thus, $N_{G}(u) \cup N_{G}(v) = V$, and this union is a partition. 
    By the $K_{3}$-freeness of $G$, both $N_{G}(u)$ and $N_{G}(v)$ are independent sets. 
    Thus, $G$ is bipartite with parts $N_{G}(u)$ and $N_{G}(v)$.

    \bigskip 

    \textbf{Case 2}: $\delta > \frac{n}{2}-\frac{\Delta}{4}$. 

    Similar to the proof of Case 1. 
    Let $\{u,v\} \in G$ be an edge with $d_{G}(u) = \Delta$. 
    Let $V_1$ and $V_2$ be two disjoint independent sets in $G$ such that $N_{G}(v) \subseteq V_1$ and $N_{G}(u) \subseteq V_2$, and such that $|V_1| + |V_2|$ is maximized. 
    Let $T \coloneqq V \setminus (V_1 \cup V_2)$. 
    Since $N_{G}(u) \cap N_{G}(v) = \emptyset$, we have 
    \begin{align}\label{equ:K3-T-size}
        |T|
        \le n - |N_{G}(u)| - |N_{G}(v)|
        \le n - \Delta - \delta. 
    \end{align}
    We may assume $T \neq \emptyset$, since otherwise, $G$ is a bipartite graph with parts $V_1$ and $V_2$. 
    Fix a vertex $w\in T$. 
    By the maximality of $|V_1| + |V_2|$, the vertex $w$ must have neighbors in both $V_1$ and $V_2$ (otherwise, we can move $w$ to $V_1$ or $V_2$). 
    Let $N_1 \coloneqq N_{G}(w) \cap V_1$ and $N_2 \coloneqq N_{G}(w) \cap V_2$, and choose $v_1 \in N_1$ and $v_2 \in N_2$. 
    Since $V_1$ and $V_2$ are independent, we have 
    \begin{align}\label{equ:K3-v1v2-intersection}
        N_{G}(v_1) \cap V_1
        = N_{G}(v_2) \cap V_2
        = \emptyset. 
    \end{align}
    Since $G$ is $K_3$-free, we also have $N_{G}(v_1) \cap N_{G}(w) = N_{G}(v_2) \cap N_{G}(w)= \emptyset$. 
    It follows that 
    \begin{align*}
        |N_{G}(v_1) \cap T| + |N_{G}(v_2) \cap T| + |N_{G}(w) \cap T|
        & \le 2|T|, \\
        |N_{G}(v_1) \cap V_2| & \le |V_2| - |N_2|, \\
        |N_{G}(v_2) \cap V_1| & \le |V_1| - |N_1|.
    \end{align*}
    Combining these inequalities with~\eqref{equ:K3-v1v2-intersection}, we obtain 
    \begin{align*}
        d_{G}(v_1) + d_{G}(v_2) + d_{G}(w)
        \le |V_2| - |N_2| + |V_1| - |N_1| + |N_1| + |N_2| + 2|T|
        = n + |T|. 
    \end{align*}
    Together with~\eqref{equ:K3-T-size}, this yields
    \begin{align*}
        3\delta 
        \le n + |T|
        \le n + n - \Delta - \delta, 
    \end{align*}
    which simplifies to $\delta \le \frac{n}{2} - \frac{\Delta}{4}$, contradicting our assumption.
\end{proof}

We are now ready to present the proof of Theorem~\ref{THM:max-deg-AES-clique}. 

\begin{proof}[Proof of Theorem~\ref{THM:max-deg-AES-clique}]
    We prove this theorem by induction on $r$. 
    The base case $r=2$ is established in Lemma~\ref{LEMMA:max-deg-triangle}.
    Thus we may assume that $r\ge 3$. 
    
    Let $G$ be an $n$-vertex $K_{r+1}$-free graph.  
    For convenience, let $V \coloneqq V(G)$, $\Delta \coloneqq \Delta(G)$, and $\delta \coloneqq \delta(G)$. 

    \bigskip

    \textbf{Case 1}: $\delta > n - \frac{\Delta+1}{r-1}$. 

    Since both $\delta$ and $\Delta$ are integers, the assumption is equivalent to $(r-1)\delta + \Delta \ge (r-1)n$ (and thus also to $\delta \ge n - \frac{\Delta}{r-1}$). 
    Thus, by adding edges to $G$ while preserving the condition that $G$ remains $K_{r+1}$-free, we may assume that $G$ is maximal, that is, the addition of any new edge to $G$ creates a copy of $K_{r+1}$.  

    Let $u \in V$ be a vertex of maximum degree in $G$. 
    Let $X \coloneqq N_{G}(u)$ and let $H \coloneqq G[X]$. 
    Since $G$ is $K_{r+1}$-free, the graph $H$ is $K_{r}$-free. 

    \begin{claim}\label{CLAIM:Kr-H-induction-case1}
        The graph $H$ is $(r-1)$-partite.
    \end{claim}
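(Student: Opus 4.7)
The plan is to derive the claim from the inductive hypothesis (Theorem~\ref{THM:max-deg-AES-clique} with parameter $r-1$) applied to the neighborhood subgraph $H = G[X]$, which has exactly $\Delta$ vertices. That $H$ is $K_r$-free is automatic: any $K_r$ in $H$ would form a $K_{r+1}$ in $G$ together with $u$, contradicting $K_{r+1}$-freeness of $G$.

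The key step is to establish a lower bound on $\delta(H)$. Setting $Y \coloneqq V \setminus (\{u\} \cup X)$, so that $|Y| = n - 1 - \Delta$, every $v \in X$ satisfies
\[
d_H(v) = d_G(v) - 1 - |N_G(v) \cap Y| \ge \delta - 1 - (n - 1 - \Delta) = \delta + \Delta - n.
\]
The Case~1 assumption, rewritten as $\delta \ge n - \frac{\Delta}{r-1}$, then yields
\[
\delta(H) \ge \delta + \Delta - n \ge \frac{(r-2)\Delta}{r-1}.
\]

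To close the induction, I would verify the second branch of the min in the inductive hypothesis for $H$, namely $\delta(H) > \Delta - \frac{\Delta(H)+1}{r-2}$, equivalently $\Delta(H) + 1 > (r-2)\bigl(\Delta - \delta(H)\bigr)$. Using $\Delta - \delta(H) \le n - \delta \le \frac{\Delta}{r-1}$ together with $\Delta(H) \ge \delta(H) \ge \frac{(r-2)\Delta}{r-1}$, both sides are bounded by $\frac{(r-2)\Delta}{r-1}$, and the ``$+1$'' breaks the tie to give a strict inequality. Applying the inductive hypothesis then yields that $H$ is $(r-1)$-partite. The argument is essentially bookkeeping; the only potential pitfall is that the Case~1 bound $\delta \ge n - \frac{\Delta}{r-1}$ has to transfer precisely into the second branch of the inductive hypothesis for $H$ under the natural substitution $n \mapsto \Delta$, and the calculation above confirms that it is exactly calibrated for this. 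No further structural analysis of $H$ is required.
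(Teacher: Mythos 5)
Your proposal is correct and follows essentially the same route as the paper: bound $\delta(H) \ge \delta + \Delta - n \ge \tfrac{(r-2)\Delta}{r-1}$ using the Case~1 assumption $\delta \ge n - \tfrac{\Delta}{r-1}$, then check the second branch of the minimum in the inductive hypothesis for $H$ (the paper writes this as $(r-2)\delta(H)+\Delta(H) \ge (r-1)\delta(H) \ge (r-2)|X|$, which is the same ``$+1$ breaks the tie'' bookkeeping you perform). No gap.
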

    \begin{proof}[Proof of Claim~\ref{CLAIM:Kr-H-induction-case1}]
        Fix a vertex $v \in X$ of minimum degree in $H$. 
        It follows from the assumption on $\delta$ that  
        \begin{align*}
            \frac{\delta(H)}{|X|}
            \ge \frac{d_{G}(v) - \left( n-|X| \right)}{|X|}
            \ge \frac{\delta - (n-\Delta)}{\Delta}
            \ge \frac{1}{\Delta}\left( n - \frac{\Delta}{r-1} - (n-\Delta)\right)
            = \frac{r-2}{r-1}. 
        \end{align*}
        Rearranging yields $(r-1)\delta(H) \ge (r-2)|X|$, and thus 
        \begin{align*}
            (r-2)\delta(H) + \Delta(H)
            \ge (r-1)\delta(H) \ge (r-2)|X|. 
        \end{align*}
        By the inductive hypothesis, $H$ is $(r-1)$-partite. 
    \end{proof}
    
    Let $V_2 \cup \cdots \cup V_{r} = X$ be an $(r-1)$-partition such that $H[V_i] = \emptyset$ for $i \in [2,r]$. 
    We may assume that there exists a vertex $w\in V\setminus (X \cup \{u\})$, since otherwise $G$ is already $r$-partite. 
    Note that $\{u,w\} \not\in G$. 
    Thus, by the maximality of $G$, the induced subgraph of $G$ on $N_{G}(u) \cap N_{G}(w) \subseteq N_{G}(u)$ contains a copy of $K_{r-1}$.  
    It follows that there exist vertices $v_i \in V_i$ for $i \in [2,r]$ such that $\{v_2, \ldots, v_{r}\}$ induces a copy of $K_{r-1}$ in $G$. 
    
    Let $V_1 \coloneqq \bigcap_{i=2}^{r} N_{G}(v_i) \subseteq V\setminus X$, noting that $V_1$ is independent in $G$. 
    Using the assumption on $\delta$ and the Inclusion-Exclusion Principle, we obtain 
    \begin{align*}
        |V_1|
        \ge (r-1)\delta - (r-2)n
        \ge (r-1)\left(n - \frac{\Delta}{r-1}\right) - (r-2)n
        \ge n - \Delta
        = n - |X|. 
    \end{align*}
    Thus, $V_1 = V\setminus X$, and hence, $V_1 \cup \cdots \cup V_r$ is a partition of $V$. 
    Consequently, $G$ is $r$-partite with parts $V_1, \ldots, V_{r}$.

    \bigskip

    \textbf{Case 2}: $\delta > \frac{3r-4}{3r-2}n - \frac{\Delta}{3r-2}$.

    Note that the assumption is equivalent to that $(3r-2)\delta + \Delta > (3r-4)n$. As in Case 1, we may assume that $G$ is maximal. 

    Let $u \in V$ be a vertex of maximum degree in $G$. 
    Let $X \coloneqq N_{G}(u)$ and let $H \coloneqq G[X]$. 
    Since $G$ is $K_{r+1}$-free, the graph $H$ is $K_{r}$-free. 

    \begin{claim}\label{CLAIM:Kr-H-induction-case2}
        The graph $H$ is $(r-1)$-partite.
    \end{claim}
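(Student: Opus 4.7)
The plan is to verify that $H = G[X]$ satisfies the hypothesis of Theorem~\ref{THM:max-deg-AES-clique} with $r$ replaced by $r-1$, so that the inductive hypothesis forces $H$ to be $(r-1)$-partite. Recall that the Case~2 threshold in that theorem takes the form $\delta > \frac{3r-4}{3r-2}n - \frac{\Delta}{3r-2}$, while in the proof of Claim~\ref{CLAIM:Kr-H-induction-case1} the Case~1 hypothesis produced the stronger bound $(r-1)\delta(H) \ge (r-2)|X|$, which lands in the second term of the $\min$. Since the Case~2 hypothesis on $\delta$ is weaker, I would instead aim for the first term of the $\min$ at rank $r-1$, namely $\delta(H) > \frac{3r-7}{3r-5}|H| - \frac{\Delta(H)}{3r-5}$, with $|H| = |X| = \Delta$.

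Step one is the same elementary observation used in Claim~\ref{CLAIM:Kr-H-induction-case1}: every $v\in X$ has at most $n-\Delta$ neighbors outside $X$, so $\delta(H) \ge \delta + \Delta - n$. Substituting the Case~2 hypothesis $(3r-2)\delta + \Delta > (3r-4)n$ into this yields the working bound $(3r-2)\delta(H) > (3r-3)\Delta - 2n$. In parallel, applying $\delta \le \Delta$ to the same hypothesis gives $(3r-1)\Delta > (3r-4)n$, which is exactly the classical Andr\'{a}sfai--Erd\H{o}s--S\'{o}s threshold $\Delta > \frac{3r-4}{3r-1}n$ and so is automatic under Case~2.

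Step two is to combine these two inequalities. Using the trivial bound $\Delta(H) \ge \delta(H)$, the target $(3r-5)\delta(H) + \Delta(H) > (3r-7)\Delta$ is implied by $(3r-4)\delta(H) > (3r-7)\Delta$. A short algebraic manipulation shows that this last inequality follows from the two bounds above: multiplying $(3r-2)\delta(H) > (3r-3)\Delta - 2n$ by $\frac{3r-4}{3r-2}$ and comparing with $(3r-7)\Delta$, after clearing denominators one finds that the required implication reduces exactly to $(3r-1)\Delta \ge (3r-4)n$, which we have. The inductive hypothesis at rank $r-1$ then gives that $H$ is $(r-1)$-partite, completing the claim.

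The main obstacle I anticipate is the algebraic bookkeeping in the final reduction. The constants $\frac{3r-4}{3r-2}$ and $\frac{1}{3r-2}$ in the Case~2 threshold are tightly tuned, and the argument appears to close only if the first term of the $\min$, rather than the second, is targeted at the inductive step---mirroring the split between Case~1 and Case~2 in the outer induction. Encouragingly, the crude bound $\Delta(H) \ge \delta(H)$ seems to suffice, so no finer structural information about $\Delta(H)$ (for instance from the maximality of $G$) appears to be needed, which keeps the argument structurally parallel to Claim~\ref{CLAIM:Kr-H-induction-case1}.
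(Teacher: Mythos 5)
Your proposal is correct and follows essentially the same route as the paper: the paper likewise bounds $\delta(H) \ge \delta - (n-\Delta)$, feeds in the Case~2 hypothesis together with $\Delta > \frac{3r-4}{3r-1}n$ (from $\delta \le \Delta$) to get $\delta(H) > \frac{3r-7}{3r-4}|X|$, and then uses $\Delta(H) \ge \delta(H)$ to verify the first term of the $\min$ at rank $r-1$, i.e.\ $(3r-5)\delta(H) + \Delta(H) > (3r-7)|X|$, before invoking the inductive hypothesis. The only difference is cosmetic (you work with linear forms, the paper with the ratio $\delta(H)/|X|$), and your identification of which branch of the $\min$ to target matches the paper exactly.
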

    \begin{proof}[Proof of Claim~\ref{CLAIM:Kr-H-induction-case2}]
        Fix a vertex $v \in X$ of minimum degree in $H$. 
        It follows from the assumption on $\delta$ that 
        \begin{align*}
            \frac{\delta(H)}{|X|}
            \ge \frac{d_{G}(v) - \left( n-|X| \right)}{|X|}
            \ge \frac{\delta - (n-\Delta)}{\Delta}
            & > \frac{1}{\Delta}\left( \frac{3r-4}{3r-2}n - \frac{\Delta}{r-1} - (n-\Delta)\right) \\
            & = \frac{3r-3}{3r-2} - \frac{2n}{(3r-2)\Delta}. 
        \end{align*}
        Since $(3r-1)\Delta \ge (3r-2)\delta + \Delta > (3r-4)n$, we have $\Delta > \frac{3r-4}{3r-1}n$. 
        Substituting this bound into the previous inequality gives 
        \begin{align*}
            \frac{\delta(H)}{|X|}
            > \frac{3r-3}{3r-2} - \frac{2}{3r-2} \cdot \frac{3r-1}{3r-4}
            = \frac{3r-7}{3r-4}. 
        \end{align*}
        Therefore, 
        \begin{align*}
            (3r-5) \delta(H) + \Delta(H)
            \ge (3r-4)\delta(H)
            > (3r-7) |X|. 
        \end{align*}
        So it follows from the inductive hypothesis that $H$ is $(r-1)$-partite. 
    \end{proof}

    Let $V_2 \cup \cdots \cup V_{r} = X$ be an $(r-1)$-partition such that $H[V_i] = \emptyset$ for $i \in [2,r]$. 
    As in Case 1, by the maximality of $G$, there exist vertices $v_i \in V_i$ for $i \in [2,r]$ such that $\{v_2, \ldots, v_{r}\}$ induces a copy of $K_{r-1}$ in $G$. 
    
    Let $V_1$ be a maximum independent set in $G$ subject to the condition that $\bigcap_{i=2}^{r} N_{G}(v_i) \subseteq V_1$. 
    This is well-defined because $\bigcap_{i=2}^{r} N_{G}(v_i)$ is independent in $G$ (by the $K_{r+1}$-freeness of $G$). 
    Observe also that $V_1 \subseteq V\setminus X$. 
    It follows from the assumption on $\delta(G)$ and the Inclusion-Exclusion Principle that 
    \begin{align}\label{equ:Kr-V1-size}
        |V_1|
        \ge \left| \cap_{i=2}^{r} N_{G}(v_i) \right|
        \ge (r-1)\delta - (r-2)n
        & > (r-1)\left(\frac{3r-4}{3r-2}n - \frac{\Delta}{3r-2}\right) - (r-2)n \notag \\
        & = \frac{r}{3r-2} n - \frac{r-1}{3r-2}\Delta. 
    \end{align}
    Let $Z \coloneqq V \setminus (V_1 \cup \cdots \cup V_{r})$. 
    Using~\eqref{equ:Kr-V1-size}, we obtain 
    \begin{align}\label{equ:Kr-Z-size}
        |Z|
        = n - \left(|V_1| + \cdots + |V_{r}|\right)
        = n - |V_1| - |X|
        & < n - \left(\frac{r}{3r-2} n - \frac{r-1}{3r-2}\Delta\right) - \Delta \notag \\
        & = \frac{2r-2}{3r-2} n - \frac{2r-1}{3r-2}\Delta. 
    \end{align}
    We may assume that $Z \neq \emptyset$, since otherwise $G$ is $r$-partite with parts $V_1, \ldots, V_{r}$. 
    Fix a vertex $z \in Z$.
    It follows from the maximality of $V_1$ that $N_{G}(z) \cap V_1 \neq \emptyset$. 

    Fix $u_1 \in N_{G}(z) \cap V_1$. Since $X = N_{G}(u)$ and $z \not\in X$, we have $u_1 \neq u$. 
    Let $S \subseteq X$ be a largest subset such that $\{z,u_1\} \cup S$ induces a complete subgraph in $G$, noting from the $K_{r+1}$-freeness of $G$ that $|S| \le r-2$. 
    
    Let $j \coloneqq |S|+1 \le r-1$, and assume that $S = \{u_2, \ldots, u_j\}$. 
    For each $i \in [2,r]$, since $G[V_i] = \emptyset$, we have $|S\cap V_i| \le 1$. 
    By relabeling the sets $V_2, \ldots, V_r$ if necessary, we may assume that $u_i \in V_i$ for $i \in [2,j]$. 
    
    \begin{claim}\label{CLAIM:Kr-max-clique}
        We have $j = r-1$.
    \end{claim}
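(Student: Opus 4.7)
The plan is to argue by contradiction: assume $j \le r - 2$ and show that the common neighborhood
\[
T \coloneqq \bigcap_{w \in C^*} N_G(w), \quad \text{where } C^* \coloneqq \{z, u_1, u_2, \ldots, u_j\},
\]
is simultaneously forced to be small (by structural restrictions) and large (by the minimum-degree hypothesis).

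First, I would show $T \subseteq Z \setminus \{z\}$ via three elementary observations: (i) $T \cap X = \emptyset$, because by the maximality of $S$ no vertex of $X \setminus S$ extends $C^*$ to a larger clique in $X$, and elements of $S = \{u_2, \ldots, u_j\}$ are excluded trivially since $u_i \notin N_G(u_i)$; (ii) $T \cap V_1 = \emptyset$, because $V_1$ is independent and $u_1 \in V_1$, so no vertex of $V_1$ is adjacent to $u_1$; (iii) $T \cap C^* = \emptyset$ is trivial. Consequently $T \subseteq V \setminus (X \cup V_1 \cup \{z\}) = Z \setminus \{z\}$, yielding the upper bound $|T| \le |Z|$.

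Second, applying the Inclusion--Exclusion Principle to the $j+1$ neighborhoods gives
\[
|T| \ge \sum_{w \in C^*} d_G(w) - j n \ge (j+1)\delta - j n.
\]
Combining this with the bound~\eqref{equ:Kr-Z-size} on $|Z|$ yields $(j+1)\delta - jn < \frac{2r-2}{3r-2}n - \frac{2r-1}{3r-2}\Delta$.

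The final step, which I expect to be the main computational obstacle, is to derive a contradiction from the above using the Case 2 hypothesis $(3r-2)\delta > (3r-4)n - \Delta$ together with its consequence $\Delta > \frac{3r-4}{3r-1}n$ (obtained from $\delta \le \Delta$). Multiplying the Case 2 hypothesis by $j+1$ and substituting reduces, after clearing denominators, to $(2r - j - 2)\Delta < (2j + 2 - r)n$. When $2j + 2 \le r$ the right-hand side is non-positive while the left is strictly positive (using $j < 2r - 2$), an immediate contradiction. When $2j + 2 > r$, the inequality forces $\Delta < \frac{2j+2-r}{2r-j-2}n$, which I would contradict via the polynomial comparison $(3r-4)(2r-j-2) \ge (3r-1)(2j+2-r)$, a routine check valid for every $j \in \{1, \ldots, r-2\}$ and $r \ge 3$.
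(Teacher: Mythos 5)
Your argument is correct and follows essentially the same route as the paper: bound the common neighborhood $N_G(z)\cap N_G(u_1)\cap\cdots\cap N_G(u_j)$ below by inclusion--exclusion, show via the maximality of $S$ (and the independence of $V_1$) that it lies in $Z$, and contradict the upper bound~\eqref{equ:Kr-Z-size} together with $\Delta>\frac{3r-4}{3r-1}n$. The only difference is cosmetic: the paper uses the monotonicity of $(j+1)\delta-jn$ in $j$ (since $\delta\le n$) to replace $j$ by $r-2$ at once, whereas you carry $j$ through and settle the inequality $(2r-j-2)\Delta<(2j+2-r)n$ by a case analysis and a polynomial comparison, which does check out.
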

    \begin{proof}[Proof of Claim~\ref{CLAIM:Kr-max-clique}]
        Let $N \coloneqq N_{G}(z) \cap N_{G}(u_1) \cap \cdots \cap N_{G}(u_{j})$. 
        Suppose to the contrary that $j \le r-2$. 
        Then it follows from the assumption on $\delta(G)$ and the Inclusion-Exclusion Principle that 
        \begin{align*}
            |N|
            \ge (j+1) \delta - j n
            \ge (r-2 +1) \delta - (r-2) n
            & > (r-1) \left(\frac{3r-4}{3r-2} n - \frac{\Delta}{3r-2}\right) - (r-2)n \\
            & = \frac{r}{3r-2} n - \frac{r-1}{3r-2} \Delta.  
        \end{align*}
        It follows from the maximality of $S$ that $N \subseteq Z$, and hence, $|N| \le |Z|$. 
        Combining this with~\eqref{equ:Kr-Z-size}, we obtain 
        \begin{align*}
            \frac{r}{3r-2} n - \frac{r-1}{3r-2} \Delta
            < \frac{2r-2}{3r-2} n - \frac{2r-1}{3r-2}\Delta, 
        \end{align*}
        which implies that 
        \begin{align*}
            \Delta 
            < \frac{r-2}{r} n 
            < \frac{3r-4}{3r-1}n, 
        \end{align*}
        a contradiction to our assumption. 
        Thus, $j = r-1$, proving the claim. 
    \end{proof}

    Claim~\ref{CLAIM:Kr-max-clique} shows that $\{z, u_1, \ldots, u_{r-1}\}$ induces a copy of $K_{r}$ in $G$. 
    Recall that $X = N_{G}(u)$ and $z \not\in X$.
    By the maximality of $G$, there exists an $(r-1)$-set $W = \{w_2, \ldots, w_{r}\} \subseteq N_{G}(u) \cap N_{G}(z) \subseteq X$ such that $W$ induces a copy of $K_{r-1}$ in $G$. 
    Since $G[X]$ is $(r-1)$-partite, $|W \cap V_i| = 1$ for $i \in [2,r]$. 
    By relabeling vertices in $W$ if necessary, we may assume that $w_i \in V_i$ for $i \in [2,r]$. 

    Let $N_1 \coloneqq N_{G}(u_1) \cap \cdots \cap N_{G}(u_{r-1})$ and $N_2 \coloneqq N_{G}(w_2) \cap \cdots \cap N_{G}(w_r)$, noting that for $i \in \{1,2\}$, 
    \begin{align}\label{equ:Kr-Ni-size}
        |N_i|
        \ge (r-1)\delta - (r-2)n
        & > (r-1)\left(\frac{3r-4}{3r-1} n - \frac{\Delta}{3r-2}\right) - (r-2)n \notag \\
        & = \frac{r}{3r-2} n - \frac{r-1}{3r-2} \Delta. 
    \end{align}
    Observe that $N_1 \subseteq Z\cup V_{r}$ and $N_{2} \subseteq Z \cup V_1$, so $N_1 \cap N_2 \subseteq Z$. 
    Also, since $G$ is $K_{r+1}$-free, we have $N_{G}(z) \cap N_1 = N_{G}(z) \cap N_2 = \emptyset$. 
    Combining these with~\eqref{equ:Kr-Z-size} and~\eqref{equ:Kr-Ni-size}, we obtain 
    \begin{align*}
        |N_1 \cup N_2 \cup N_{G}(z)|
        & = |N_1| + |N_2| + |N_{G}(z)| - |N_1 \cap N_2| \\
        & \ge |N_1| + |N_2| + |N_{G}(z)| - |Z| \\
        & > 2\left(\frac{r}{3r-2} n - \frac{r-1}{3r-2} \Delta\right) + \delta - \left(\frac{2r-2}{3r-2} n - \frac{2r-1}{3r-2}\Delta\right) \\
        & > \frac{2}{3r-2}n + \frac{\Delta}{3r-2} + \frac{3r-4}{3r-2}n - \frac{\Delta}{3r-2}
        = n, 
    \end{align*}
    a contradiction. 
    This completes the proof of Theorem~\ref{THM:max-deg-AES-clique}. 
\end{proof}

\section{Proof of Theorem~\ref{THM:max-deg-AES-odd-cycle}}\label{SEC:proof-odd-cycle}
In this section, we present the proof of Section~\ref{THM:max-deg-AES-odd-cycle}. 
The following simple fact will be used multiple times. 
For convenience, we use $g_{\mathrm{odd}}(G)$ to denote the length of the shortest odd cycle in $G$. 

\begin{fact}\label{FACT:odd-girth}
    Let $G$ be a graph with $g_{\mathrm{odd}}(G) = 2m+1$ and let $C = v_0 v_1 \cdots v_{2m} \subseteq G$ be a cycle of length $2m+1$ in $G$. 
    For every vertex $u \in V(G)$, either $|N_{G}(u) \cap V(C)| \le 1$ or there exists $i \in [2m+1]$ such that $N_{G}(u) \cap V(C) = \{v_{i}, v_{i+2}\}$, where the indices are taken modulo $2m+1$. 
\end{fact}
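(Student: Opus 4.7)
The plan is to analyze the cycles through $u$ formed by attaching $u$ to each of the two arcs of $C$ that connect a pair of its neighbors on $C$. First, assuming $|N_G(u) \cap V(C)| \ge 2$, I would pick any two neighbors $v_a, v_b$ of $u$ on $C$. The two arcs of $C$ from $v_a$ to $v_b$ have lengths $\ell$ and $2m+1-\ell$ with $1 \le \ell \le 2m$, so together with the edges $uv_a$ and $uv_b$ they produce two cycles through $u$ of lengths $\ell+2$ and $2m+3-\ell$. Since the sum $2m+5$ is odd, exactly one of these two cycles has odd length.

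Next, invoking the hypothesis $g_{\mathrm{odd}}(G) = 2m+1$, the odd one of the two cycles has length at least $2m+1$; consequently the other (even) cycle has length at most $4$. Since every cycle has length at least $3$ and this one is even, its length is exactly $4$. Therefore one of the two arcs between $v_a$ and $v_b$ on $C$ has length exactly $2$, i.e., $v_b = v_{a \pm 2}$ modulo $2m+1$.

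Finally, to rule out a third neighbor $v_c \in N_G(u) \cap V(C)$, I would apply the same reasoning to the pairs $\{v_a, v_c\}$ and $\{v_b, v_c\}$; this forces $v_a, v_b, v_c$ to be pairwise at cycle distance $2$ on $C$, which is impossible on an odd cycle of length $2m+1 \ge 5$ (a direct check on $C_5$ suffices, and larger odd cycles leave even less room). Hence $N_G(u) \cap V(C) = \{v_a, v_b\}$ with $v_b = v_{a+2}$ after relabelling, yielding the desired index $i$. I do not anticipate any real obstacle: the argument is just a parity count combined with the odd-girth hypothesis, with the only small bookkeeping points being that an even cycle of length at most $4$ must have length exactly $4$, and that three pairwise cycle-distance-$2$ vertices cannot coexist on an odd cycle of length at least $5$.
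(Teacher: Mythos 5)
Your parity argument is sound, and since the paper states this Fact without any proof (it is only called a ``simple fact''), there is no official argument to compare against; your route is the natural one. For $u \notin V(C)$ with two neighbours $v_a,v_b$ on $C$, the two arcs together with $uv_a,uv_b$ are genuine cycles of lengths $\ell+2$ and $2m+3-\ell$, exactly one of which is odd; the odd-girth hypothesis then forces the even one to be a $4$-cycle, so one arc has length exactly $2$. Your exclusion of a third neighbour is also correct: for $2m+1\ge 5$ the pairs of vertices at cycle-distance exactly $2$ form a single $(2m+1)$-cycle (as $\gcd(2,2m+1)=1$), which is triangle-free, so three pairwise distance-$2$ vertices cannot exist.

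There is, however, one case your argument as written does not cover: $u \in V(C)$. The Fact quantifies over every $u \in V(G)$, and the paper applies it to the cycle vertices themselves (for instance in the bound $\sum_{u\in V}|N_G(u)\cap C|\le 2n$ in Case~1 of the proof of Theorem~\ref{THM:max-deg-AES-odd-cycle}). If $u=v_j$ lies on $C$, then for any pair of its neighbours on $C$ one of the two arcs passes through $u$ (and when $v_a=v_{j\pm 1}$ the edge $uv_a$ is itself an edge of that arc), so the construction ``two arcs plus $uv_a,uv_b$'' does not yield two cycles, and the step ``exactly one of these two cycles is odd, hence has length at least $2m+1$'' has nothing to apply to. The fix is one line, by the same parity idea applied to a single chord: a chord $v_jv_b$ splits $C$ into arcs of lengths $p$ and $2m+1-p$ with $2\le p\le 2m-1$, producing cycles of lengths $p+1$ and $2m+2-p$, one of which is an odd cycle of length at most $2m$, a contradiction; hence a vertex of $C$ has no chords, and its neighbourhood on $C$ is exactly $\{v_{j-1},v_{j+1}\}=\{v_i,v_{i+2}\}$ with $i=j-1$. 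A final cosmetic remark: both your argument and the Fact as stated require $2m+1\ge 5$ (for $m=1$ the conclusion fails, e.g.\ in $K_4$), which is harmless here since the paper only invokes the Fact with $m\ge k+1\ge 3$, but you should state the assumption explicitly.
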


\begin{proof}[Proof of Theorem~\ref{THM:max-deg-AES-odd-cycle}]
    Let $G$ be an $n$-vertex $C_{\le 2k+1}$-free graph satisfying the assumptions of the theorem. 
    We may assume that $k \ge 2$, since the case $k=1$ is already established in Lemma~\ref{LEMMA:max-deg-triangle}. 
    For convenience, let $V \coloneqq V(G)$, $\Delta \coloneqq \Delta(G)$, and $\delta \coloneqq \delta(G)$. 
    Suppose that $g_{\mathrm{odd}}(G) = 2m+1$ for some integer $m \ge k+1$. 

    \bigskip

    \textbf{Case 1}: There exists a $(2m+1)$-cycle $C = v_0 v_1 \cdots v_{2m} \subseteq G$ such that $d_{G}(v_0) = \Delta$.  

    \begin{figure}[htbp]
    \centering
    \tikzset{every picture/.style={line width=1pt}} 
    \begin{tikzpicture}[x=0.75pt,y=0.75pt,yscale=-0.7,xscale=0.7]
    \draw   (280.88,131.58) -- (272.15,183.49) -- (232.08,217.65) -- (179.44,218.06) -- (138.84,184.54) -- (129.3,132.77) -- (155.26,86.97) -- (204.59,68.57) -- (254.2,86.19) -- cycle ;
    \draw [color=magenta, fill=magenta] (204.59,68.57) circle (2pt);
    \draw [color=magenta, fill=magenta] (254.2,86.19) circle (2pt);
    \draw [fill=uuuuuu] (280.88,131.58) circle (2pt);
    \draw [fill=uuuuuu] (272.15,183.49) circle (2pt);
    \draw [color=magenta, fill=magenta] (232.08,217.65) circle (2pt);
    \draw [color=magenta, fill=magenta] (179.44,218.06) circle (2pt);
    \draw [fill=uuuuuu] (138.84,184.54) circle (2pt);
    \draw [fill=uuuuuu] (129.3,132.77) circle (2pt);
    \draw [fill=uuuuuu] (155.26,86.97) circle (2pt);
    %
    \draw   (546.9,153.46) -- (528.69,194.14) -- (491.38,218.52) -- (446.81,218.86) -- (409.13,195.05) -- (390.31,154.64) -- (396.32,110.48) -- (425.25,76.57) -- (467.92,63.69) -- (510.78,75.93) -- (540.22,109.39) -- cycle ;
    \draw [color=magenta, fill=magenta] (467.92,63.69) circle (2pt);
    \draw [color=magenta, fill=magenta] (510.78,75.93) circle (2pt);
    \draw [fill=uuuuuu] (540.22,109.39) circle (2pt);
    \draw [fill=uuuuuu] (546.9,153.46) circle (2pt);
    \draw [color=magenta, fill=magenta] (528.69,194.14) circle (2pt);
    \draw [color=magenta, fill=magenta] (491.38,218.52) circle (2pt);
    \draw [fill=uuuuuu] (446.81,218.86) circle (2pt);
    \draw [fill=uuuuuu] (409.13,195.05) circle (2pt);
    \draw [color=magenta, fill=magenta] (390.31,154.64) circle (2pt);
    \draw [fill=uuuuuu] (396.32,110.48) circle (2pt);
    \draw [fill=uuuuuu] (425.25,76.57) circle (2pt);
    %
    \draw (195,49) node [anchor=north west][inner sep=0.75pt]   [align=left] {$v_{0}$};
    \draw (252,67) node [anchor=north west][inner sep=0.75pt]   [align=left] {$v_{1}$};
    \draw (285.33,123.33) node [anchor=north west][inner sep=0.75pt]   [align=left] {$v_{2}$};
    \draw (277.33,177) node [anchor=north west][inner sep=0.75pt]   [align=left] {$v_{3}$};
    \draw (227,224) node [anchor=north west][inner sep=0.75pt]   [align=left] {$v_{4}$};
    \draw (171,224) node [anchor=north west][inner sep=0.75pt]   [align=left] {$v_{5}$};
    \draw (113,180) node [anchor=north west][inner sep=0.75pt]   [align=left] {$v_{6}$};
    \draw (103,123) node [anchor=north west][inner sep=0.75pt]   [align=left] {$v_{7}$};
    \draw (130,70) node [anchor=north west][inner sep=0.75pt]   [align=left] {$v_{8}$};
    %
    %
    \draw (458,43) node [anchor=north west][inner sep=0.75pt]   [align=left] {$v_{0}$};
    \draw (508,57) node [anchor=north west][inner sep=0.75pt]   [align=left] {$v_{1}$};
    \draw (544,98) node [anchor=north west][inner sep=0.75pt]   [align=left] {$v_{2}$};
    \draw (552,146) node [anchor=north west][inner sep=0.75pt]   [align=left] {$v_{3}$};
    \draw (534,197) node [anchor=north west][inner sep=0.75pt]   [align=left] {$v_{4}$};
    \draw (483,225) node [anchor=north west][inner sep=0.75pt]   [align=left] {$v_{5}$};
    \draw (438,225) node [anchor=north west][inner sep=0.75pt]   [align=left] {$v_{6}$};
    \draw (381, 192) node [anchor=north west][inner sep=0.75pt]   [align=left] {$v_{7}$};
    \draw (364, 149) node [anchor=north west][inner sep=0.75pt]   [align=left] {$v_{8}$};
    \draw (367,100) node [anchor=north west][inner sep=0.75pt]   [align=left] {$v_{9}$};
    \draw (392,58) node [anchor=north west][inner sep=0.75pt]   [align=left] {$v_{10}$};
    \end{tikzpicture}
    \caption{The set of selected vertices (in red) for cases $m=4$ and $m=5$.} 
    \label{Fig:C9C11Case1}
    \end{figure}
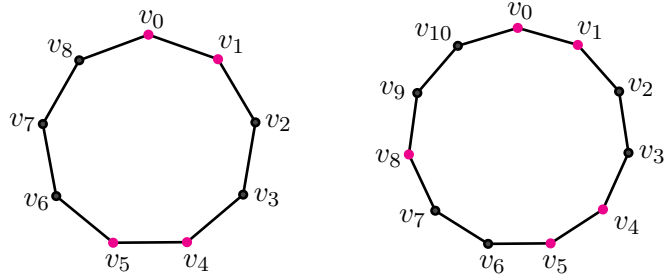

    Fix a $(2m+1)$-cycle $C = v_0 v_1 \cdots v_{2m} \subseteq G$ with $d_{G}(v_0) = \Delta$. For convenience, we slightly abuse notation by writing $C = \{v_0,\ldots,v_{2m}\}$ when no confusion arises. 
    
    Suppose that $\delta > \frac{n}{k+1} - \frac{\Delta}{2k+2}$. 
    Then it follows from Fact~\ref{FACT:odd-girth} that  
    \begin{align*}
        2n 
        \ge \sum_{u \in V} |N_{G}(u) \cap C| 
        & \ge d_{G}(v_0) + \sum_{i=1}^{2m} d_{G}(v_i) \\
        & \ge \Delta + 2m \delta
        > \Delta + 2(k+1)\left(\frac{n}{k+1} - \frac{\Delta}{2k+2}\right)
        = 2n, 
    \end{align*}
    a contradiction.

    Next, suppose that $\delta > \frac{n - \Delta - 1}{k}$. 
    Let (see Figure~\ref{Fig:C9C11Case1})
    \begin{align*}
        S
        \coloneqq
        \begin{cases}
            \left\{ 0,~1,~4,~5,~\ldots,~2m-4,~2m-3 \right\}, & \quad\text{if}\quad m \equiv 0 \Mod{2}, \\
            \left\{ 0,~1,~\ldots,~2m-6,~2m-5,~2m-2 \right\}, & \quad\text{if}\quad m \equiv 1 \Mod{2}. 
        \end{cases}
    \end{align*}
    Observe that for every pair $\{i,j\} \subseteq S$, the distance between $v_i$ and $v_j$ is not $2$, so it follows from Fact~\ref{FACT:odd-girth} that $N_{G}(v_i) \cap N_{G}(v_j) = \emptyset$. 
    Moreover, observe that $v_{2m-1} \not\in \bigcup_{i\in S} N_{G}(v_i)$ for $m \equiv 0 \Mod{2}$, and $v_{2m-2} \not\in \bigcup_{i\in S} N_{G}(v_i)$ for $m \equiv 1 \Mod{2}$. 
    Therefore, 
    \begin{align*}
        n 
        \ge 1 + \sum_{i \in S} d_{G}(v_{i})
        & \ge 1 + \Delta + (|S| - 1) \delta \\
        & = 1 + \Delta + (m-1) \delta 
         > 1 + \Delta + k \cdot \frac{n-\Delta -1}{k}
        = n, 
    \end{align*}
    a contradiction.

    \bigskip

    \textbf{Case 2}: Every $(2m+1)$-cycle $C = v_0 v_i \cdots v_{2m} \subseteq G$ satisfies $\max_{i\in [0,2m]} d_{G}(v_i) < \Delta$.

    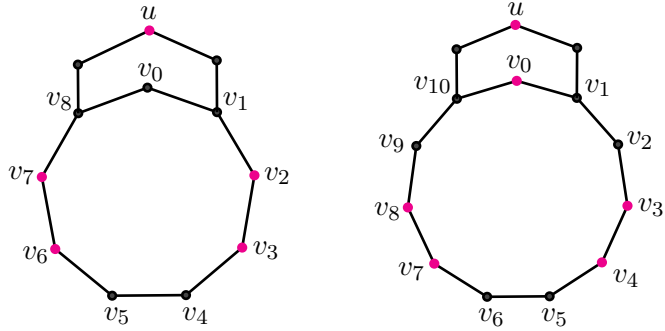
\begin{figure}[htbp]
    \centering
    \tikzset{every picture/.style={line width=1pt}} 
    \begin{tikzpicture}[x=0.75pt,y=0.75pt,yscale=-0.7,xscale=0.7]
    \draw   (280.88,131.58) -- (272.15,183.49) -- (232.08,217.65) -- (179.44,218.06) -- (138.84,184.54) -- (129.3,132.77) -- (155.26,86.97) -- (204.59,68.57) -- (254.2,86.19) -- cycle ;
    \draw [fill=uuuuuu] (204.59,68.57) circle (2pt);
    \draw [fill=uuuuuu] (254.2,86.19) circle (2pt);
    \draw [color=magenta, fill=magenta] (280.88,131.58) circle (2pt);
    \draw [color=magenta, fill=magenta] (272.15,183.49) circle (2pt);
    \draw [fill=uuuuuu] (232.08,217.65) circle (2pt);
    \draw [fill=uuuuuu] (179.44,218.06) circle (2pt);
    \draw [color=magenta, fill=magenta] (138.84,184.54) circle (2pt);
    \draw [color=magenta, fill=magenta] (129.3,132.77) circle (2pt);
    \draw [fill=uuuuuu] (155.26,86.97) circle (2pt);
    %
    \draw    (155,52) -- (155.26,86.97) ;
    \draw    (254,49) -- (254.2,86.19) ;
    \draw    (155,52) -- (206,27.5) ;
    \draw    (206,27.5) -- (254,49) ;
    \draw [color=magenta, fill=magenta] (206,27.5) circle (2pt);
    \draw [fill=uuuuuu] (155,52) circle (2pt);
    \draw [fill=uuuuuu] (254,49) circle (2pt);
    %
    \draw   (546.9,153.46) -- (528.69,194.14) -- (491.38,218.52) -- (446.81,218.86) -- (409.13,195.05) -- (390.31,154.64) -- (396.32,110.48) -- (425.25,76.57) -- (467.92,63.69) -- (510.78,75.93) -- (540.22,109.39) -- cycle ;
    \draw [color=magenta, fill=magenta] (467.92,63.69) circle (2pt);
    \draw [fill=uuuuuu] (510.78,75.93) circle (2pt);
    \draw [fill=uuuuuu] (540.22,109.39) circle (2pt);
    \draw [color=magenta, fill=magenta] (546.9,153.46) circle (2pt);
    \draw [color=magenta, fill=magenta] (528.69,194.14) circle (2pt);
    \draw [fill=uuuuuu] (491.38,218.52) circle (2pt);
    \draw [fill=uuuuuu] (446.81,218.86) circle (2pt);
    \draw [color=magenta, fill=magenta] (409.13,195.05) circle (2pt);
    \draw [color=magenta, fill=magenta] (390.31,154.64) circle (2pt);
    \draw [fill=uuuuuu] (396.32,110.48) circle (2pt);
    \draw [fill=uuuuuu] (425.25,76.57) circle (2pt);\%
    \draw    (425,41) -- (425.25,76.57) ;
    \draw    (511,40) -- (510.78,75.93) ;
    \draw    (425,41) -- (467,23.5) ;
    \draw    (467,23.5) -- (511,40) ;
    \draw [color=magenta, fill=magenta] (467,23.5) circle (2pt);
    \draw [fill=uuuuuu] (425,41) circle (2pt);
    \draw [fill=uuuuuu] (511,40) circle (2pt);
    %
    \draw (197,8) node [anchor=north west][inner sep=0.75pt]   [align=left] {$u$};
    \draw (195,49) node [anchor=north west][inner sep=0.75pt]   [align=left] {$v_{0}$};
    \draw (256,70) node [anchor=north west][inner sep=0.75pt]   [align=left] {$v_{1}$};
    \draw (285.33,123.33) node [anchor=north west][inner sep=0.75pt]   [align=left] {$v_{2}$};
    \draw (277.33,177) node [anchor=north west][inner sep=0.75pt]   [align=left] {$v_{3}$};
    \draw (227,224) node [anchor=north west][inner sep=0.75pt]   [align=left] {$v_{4}$};
    \draw (171,224) node [anchor=north west][inner sep=0.75pt]   [align=left] {$v_{5}$};
    \draw (113,180) node [anchor=north west][inner sep=0.75pt]   [align=left] {$v_{6}$};
    \draw (103,123) node [anchor=north west][inner sep=0.75pt]   [align=left] {$v_{7}$};
    \draw (130,70) node [anchor=north west][inner sep=0.75pt]   [align=left] {$v_{8}$};
    %
    %
    \draw (459,5) node [anchor=north west][inner sep=0.75pt]   [align=left] {$u$};
    \draw (458,43) node [anchor=north west][inner sep=0.75pt]   [align=left] {$v_{0}$};
    \draw (514,60) node [anchor=north west][inner sep=0.75pt]   [align=left] {$v_{1}$};
    \draw (544,98) node [anchor=north west][inner sep=0.75pt]   [align=left] {$v_{2}$};
    \draw (552,146) node [anchor=north west][inner sep=0.75pt]   [align=left] {$v_{3}$};
    \draw (534,197) node [anchor=north west][inner sep=0.75pt]   [align=left] {$v_{4}$};
    \draw (483,225) node [anchor=north west][inner sep=0.75pt]   [align=left] {$v_{5}$};
    \draw (438,225) node [anchor=north west][inner sep=0.75pt]   [align=left] {$v_{6}$};
    \draw (381, 192) node [anchor=north west][inner sep=0.75pt]   [align=left] {$v_{7}$};
    \draw (364, 149) node [anchor=north west][inner sep=0.75pt]   [align=left] {$v_{8}$};
    \draw (367,100) node [anchor=north west][inner sep=0.75pt]   [align=left] {$v_{9}$};
    \draw (392,58) node [anchor=north west][inner sep=0.75pt]   [align=left] {$v_{10}$};
    \end{tikzpicture}
    \caption{The set of selected vertices (in red) for cases $m=4$ and $m=5$.} 
    \label{Fig:C9C11Case2}
    \end{figure}

    Fix a $(2m+1)$-cycle $C = v_0 v_1 \cdots v_{2m} \subseteq G$. 
    For convenience, we slightly abuse notation by writing $C = \{v_0,\ldots,v_{2m}\}$ when no confusion arises.  
    Fix a vertex $u \in V$ with $d_{G}(u) = \Delta$. 
    By assumption, we have $u \in V\setminus C$. 

    \begin{claim}\label{CLAIM:odd-cycle-max-neighbor}
        There exists $i \in [0,2m]$ such that 
        \begin{align*}
            N_{G}(u) \cap \bigcup_{i\in [0,2m]}N_{G}(v_i)
            \subseteq N_{G}(v_i) \cup N_{G}(v_{i+2}),  
        \end{align*}
        where the indices are taken modulo $2m+1$. 
    \end{claim}
    \begin{proof}[Proof of Claim~\ref{CLAIM:odd-cycle-max-neighbor}]
        Suppose to the contrary that this claim fails. 
        Then there exist $\{i,j\} \subseteq [0, 2m]$ with $|i-j| \not\equiv 2 \Mod{2m+1}$ such that $N_{G}(u) \cap N_{G}(v_i) \neq \emptyset$ and $N_{G}(u) \cap N_{G}(v_j) \neq \emptyset$. 
        Fix $x \in N_{G}(u) \cap N_{G}(v_i)$ and $y \in N_{G}(u) \cap N_{G}(v_j)$. 
        It follows from Fact~\ref{FACT:odd-girth} that $N_{G}(v_i) \cap N_{G}(v_j) = \emptyset$. Thus, $x \neq y$.  
        
        Note that $|i-j| \not\equiv 1 \Mod{2m+1}$, since otherwise $\{v_i, v_j, x, y, u\}$ would span a copy of $C_5$ in $G$, a contradiction. 
        Therefore, the distance between $v_i$ and $v_j$ on $C$ is at least three. 
        Suppose by symmetry that $i = 1$ and $j - i \equiv 1 \Mod{2}$. 
        Then $j \in \{4, 6, \ldots, 2m-2\}$.
        
        If $j \in \{4, 6, \ldots, 2m-4\}$, then $\{v_i, v_{i+1}, \ldots, v_j, y,u,x\}$ spans an odd cycle of length $j - 1 + 4 \le 2m-1$, contradicting the assumption that $g_{\mathrm{odd}} = 2m+1$. 
        
        If $j = 2m-2$, then $\{v_i, v_{i+1}, \ldots, v_j, y,u,x\}$ spans an odd cycle of length $j - 1 + 4 = 2m+1$ with $d_{G}(u) = \Delta$, a contradiction to the assumption that no $(2m+1)$-cycle in $G$ contains a vertex of maximum degree.  
    \end{proof}

    By relabeling the vertices on $C$ if necessary, we may assume that 
    \begin{align*}
        N_{G}(u) \cap \bigcup_{i\in [0,2m]}N_{G}(v_i)
            \subseteq N_{G}(v_1) \cup N_{G}(v_{2m}). 
    \end{align*}
    Let (see Figure~\ref{Fig:C9C11Case2})
    \begin{align*}
        S
        \coloneqq
        \begin{cases}
            \left\{ 2,~3,~6,~7,~\ldots,~2m-2,~2m-1 \right\}, & \quad\text{if}\quad m \equiv 0 \Mod{2}, \\
            \left\{ 0,~3,4,~\ldots,~2m-3,~2m-2 \right\}, & \quad\text{if}\quad m \equiv 1 \Mod{2}. 
        \end{cases}
    \end{align*}
    For every pair $\{i,j\} \subseteq S$, the distance between vertices $v_i$ and $v_j$ along $C$ is not $2$, so it follows from Fact~\ref{FACT:odd-girth} that $N_{G}(v_i) \cap N_{G}(v_j) = \emptyset$.
    Moreover, it follows from Claim~\ref{CLAIM:odd-cycle-max-neighbor} that $N_{G}(u) \cap N_{G}(v_i) = \emptyset$ for every $i \in S$. 
    Therefore, 
    \begin{align*}
        n 
        \ge |N_{G}(u)| + \sum_{i \in S}|N_{G}(v_i)|
        \ge \Delta + m \delta 
        \ge \Delta + (k+1) \delta.
    \end{align*}
    If $\delta > \frac{n}{k+1} - \frac{\Delta}{2k+2}$, then 
    \begin{align*}
        n > \Delta + (k+1)\left( \frac{n}{k+1} - \frac{\Delta}{2k+2} \right)
        = n + \frac{\Delta}{2}, 
    \end{align*}
    a contradiction. 

    If $\delta > \frac{n-1-\Delta}{k}$, then 
    \begin{align*}
        n > \Delta + k \cdot \frac{n-1-\Delta}{k} + \delta 
        = n - 1 + \delta 
        \ge n
    \end{align*}
    also a contradiction.
    This completes the proof of Theorem~\ref{THM:max-deg-AES-odd-cycle}. 
\end{proof}

\section{Concluding remarks}\label{SEC:Remarks}
In this note, we studied the effect of a large-degree vertex on the classical Andr{\'a}sfai--Erd\H{o}s--S\'{o}s theorem.
We hope this will motivate further attention to the broader program of understanding how the presence of a large-degree vertex influences extremal problems.
For example, a natural step beyond Theorem~\ref{THM:max-deg-AES-clique} would be to investigate the results of~\cite{Jin93,CJK97,HJ98,GL11} under an additional maximum-degree constraint.

\section*{Acknowledgments}
X.L. was supported by the Excellent Young Talents Program (Overseas) of the National Natural Science Foundation of China. 
J.W. was supported by the National Natural Science Foundation of China No.~12471316 and Natural Science Foundation of Shanxi Province, China No.~RD2200004810.
\bibliographystyle{alpha}
\bibliography{AES}
\end{document}